\numberwithin{equation}{section}
\newtheorem{theo}{Theorem}[section]
\newtheorem{defi}[theo]{Definition}
\newtheorem{lemm}[theo]{Lemma}
\newtheorem{prop}[theo]{Proposition}
\newtheorem{rema}[theo]{Remark}
\newcommand{\cc}{\mathbb{C}}
\newcommand{\z}{\mathbb{Z}}
\newcommand{\m}{\mathfrak{m}}
\newcommand{\fb}{\mathfrak{b}}
\newcommand{\mi}{\mathbf{i}}
\newcommand{\mk}{\mathbf{k}}
\newcommand{\mm}{\mathbf{m}}
\newcommand{\mj}{\mathbf{j}}
\newcommand{\p}{\mathfrak{p}}
\newcommand{\N}{\mathfrak{g}}
\newcommand{\Ind}{\mbox{Ind}}
\begin{document}

\title{Simple restricted modules for Neveu-Schwarz algebra}

\author{Dong Liu}
\address{Department of Mathematics, Huzhou University, Zhejiang Huzhou, 313000, China}
\email{liudong@zjhu.edu.cn}

\author{Yufeng Pei}
\address{Department of Mathematics, Shanghai Normal University,
Shanghai, 200234, China} \email{pei@shnu.edu.cn}

\author{Limeng Xia}

\address{Institute of Applied System Analysis, Jiangsu University, Jiangsu Zhenjiang, 212013,
China}\email{xialimeng@ujs.edu.cn}

\thanks{Mathematics Subject Classification: 17B65, 17B68, 17B70.}

\maketitle

\begin{abstract}
In this paper, we give a construction of simple
modules generalizing both highest weight and
Whittaker modules for the Neveu-Schwarz algebra, in the spirit of the work of Mazorchuk and Zhao on simple Virasoro modules.   We establish a 1-1 correspondence
between simple restricted Neveu-Schwarz modules and simple modules of a family of finite dimensional
solvable Lie superalgebras  associated to the Neveu-Schwarz algebra. Moreover,  for two of these  superalgebras all simple
modules are classified.
\end{abstract}

\section{Introduction}

It is well known that the Virasoro algebra  plays  significant roles in diverse areas of mathematics and physics.  An  important class of modules over the Virasoro algebra is the class of weight modules have been the main focus (cf.\cite{IK3} and references therein). All simple weight  modules with finite dimensional weight spaces  were classified by Mathieu in \cite{Ma} Whittaker modules  are non-weight modules
defined over finite dimensional simple Lie algebras, first appeared in \cite{AP,Kos}. These modules have been
studied subsequently in a variety of different settings(cf. \cite{BCW, BM, BO,Ch, LWZ}). Whittaker modules for the Virasoro algebra were studied in \cite{FJK,GLZ1,OW,Ya}. Recently, Mazorchuk and Zhao \cite{MZ}  proposed a uniform construction of simple Virasoro modules generalizing
and including both highest weight and various versions of Whittaker modules. They also characterize simple Virasoro modules that are locally finite over a positive part. Motivated by \cite{MZ}, new simple modules for the Virasoro algebra and its extensions have been  studied \cite{CG1,CHS,LuZ2, MW,ZC}.

Superconformal algebras have a long history in mathematical physics. The simplest examples, after the Virasoro algebra itself
(corresponding to N = 0) are the N = 1 superconformal algebras: the Neveu-Schwarz algebra  and the
Ramond algebra. These infinite-dimensional Lie superalgebras are also called the super-Virasoro algebras as they can be regarded as natural  super generalizations of the Virasoro algebra.  Weight modules for the super-Viraoro algebras  have been extensively investigated (cf. \cite{DLM,IK1,IK2,KW,MR,Su}).  The authors \cite{LPX} introduced  Whittaker type modules over  the super-Virasoro algebras and obtain necessary and sufficient conditions for  irreducibility of these modules. The aforementioned results demonstrate that the Whittaker modules satisfy some properties that their non-super analogues do. However, there are several differences and some features that are new in the super case \cite{BCW,Se}. This leads to
an additional challenge for generalizing Lie algebra results in the Lie superalgebra setting.

It is known that both simple  highest weight modules and simple Whittaker modules for  the Neveu-Schwarz algebra are restricted Neveu-Schwarz modules. In view of this,  naturally one would want to find a unified characterization for these  simple  modules. This is part of our motivation for this paper. Whittaker modules have been studied in the framework of vertex operator algebra theory in \cite{ALPY,ALZ,HY,Ta}. Irreducibility of certain weak modules for cyclic orbifold vertex algebras have been established. It is well known (cf. \cite{KW1,Li}) that  there is an isomorphism between the category of restricted Neveu-Schwarz modules and the category of weak modules for the Neveu-Schwarz vertex operator superalgebras.

In the present paper,  we focus on the classification problem for simple restricted Neveu-Schwarz modules.  We give a  construction of simple
modules generalizing and including both highest weight and
Whittaker modules for the Neveu-Schwarz algebra, in the spirit of the work of Mazorchuk and Zhao  on  simple Virasoro modules.  We establish a 1-1 correspondence
between simple restricted Neveu-Schwarz modules and simple modules of a family of finite dimensional
solvable Lie superalgebras  associated to the Neveu-Schwarz algebra.  Furthermore  we classify all simple modules for the first and the second members in this  family. The classification problem for simple modules of the other nontrivial members
is still open as far as we know. Note that any restricted Ramond module  is a weak $\sigma$-twisted module for the Neveu-Schwarz vertex operator superalgebras, where $\sigma$ is the canonical automorphism (cf. \cite{Li1}). Classification problem for simple restricted Ramond modules can be studied similarly.

This paper is organized as follows:
 In Section 2, we recall some
notations, and collect the known facts about the Neveu-Schwarz algebra.
In Section 3, we  construct simple Neveu-Schwarz modules for  generalizing and including both highest weight and
Whittaker modules. In Section 4, we provide a characterization of simple restricted modules  for Neveu-Schwarz algebra, which reduces the problem of classification of
simple restricted Neveu-Schwarz modules to classification of simple modules over a family of finite dimensional
solvable Lie superalgebras.
In Section 5, we discuss  classification of simple modules over a family of finite dimensional
solvable Lie superalgebras. We recover the Whittaker modules for the Neveu-Schwarz algebra and produce new simple Neveu-Schwarz modules.

Throughout the paper, we shall use $\cc,  {\mathbb N}$, $\z_+$ and $\z$ to denote the sets of complex numbers, non-negative integers, positive integers and integers respectively.

\section{Preliminaries} \label{sec:preliminaries}
In this section, we recall some definitions and  results for later use.

\subsection{}
Let $V = V_{\bar 0}\oplus V_{\bar 1}$ be any $\z_2$-graded vector space. Then any element $u\in V_{\bar 0}$ (${\rm resp. }$
$u\in V_{\bar 1}$) is said to be even (${\rm resp.}$ odd). We define $|u|=0$ if $u$ is even and $|u|=1$ if $u$ is odd. Elements in $V_{\bar 0}$ or $V_{\bar 1}$ are called homogeneous. Whenever $|u|$ is written, it is
understood that $u$ is homogeneous.

 Let $L=L_{\bar 0}\oplus L_{\bar 1}$ be a  Lie superalgebra, an $L$-module is a $\z_2$-graded vector space $V=V_{\bar0}\oplus V_{\bar 1}$ together with a bilinear map, $L\times V\to V$, denoted $(x,v)\mapsto xv$ such that
$$
x(yv)-(-1)^{|x||y|}y(xv)=[x,y]v
$$
for all $x,y\in L, v\in V$, and  $L_{i} V_{j}\subseteq V_{i+j}$ for all $i, j\in\z_2$. It is clear that there is a parity change functor $\Pi$ on the category of $L$-modules, which interchanges
the $\z_2$-grading of a module. We use $U(L)$ to denote the universal enveloping algebra.

\begin{defi}
Let $L$ be a  Lie superalgebra and $V$ be an $L$-module and $x\in L$. If for any $v\in V$ there exists $n\in\z_+$ such that $x^nv=0$,  then we call that the action of  $x$  on $V$ is { locally nilpotent}.
Similarly, the action of $L$   on $V$  is {locally nilpotent} if for any $v\in V$ there exists $n\in\z_+$ such that $L^nx=0$.
\end{defi}

\begin{defi}
Let $L$ be a  Lie superalgebra and $V$ be an $L$-module and $x\in L$. If for any $v\in V$ we have $\mathrm{dim}(\sum_{n\in\z_+}\cc x^nv)<+\infty$, then we call that the action of $x$ on $V$ is   locally finite.
Similarly, the action of $L$  on $V$  is locally finite
 if for any $v\in V$ we have $\mathrm{dim}(\sum_{n\in\z_+} {L}^nv)<+\infty$.
\end{defi}

\begin{rema}{\rm The action of $x$ on $V$ is locally nilpotent implies that the action of $x$ on $V$ is locally finite.
If $L$ is a finitely generated Lie superalgebra, then the action of $L$ on $V$ is locally nilpotent implies that the action of
 $L$
 on $V$ is locally finite.
}
\end{rema}

\subsection{Neveu-Schwarz algebra}

\begin{defi}\label{AD}
The Neveu-Schwarz algebra is the Lie superalgebra
$$
\N=\bigoplus_{n\in\z}\cc L_n\oplus\bigoplus_{r\in\z+\frac12} \cc G_r\oplus\cc  {\bf c}
$$
which satisfies the following commutation relations:
\begin{align*}
[L_m, L_n] &= (m-n)L_{m+n} + \delta_{m,-n} \frac{m^3-m}{12} {\bf c}, \\
[L_m, G_r] &= \left({m\over2}-r\right)G_{m+r},\\
[G_r, G_s] &= 2L_{r+s}+\frac{1}{3}\delta_{r+s,0}\left(r^2-\frac{1}{4}\right){\bf c},\\
[\N,{\bf c}] &= 0,
\end{align*}
for all $m, n\in\z$, $r, s\in\z+\frac12$, where
$$
|L_n|=\bar{0},\quad |G_r|=\bar{1},\quad |{\bf c}|=\bar{0}.
$$
\end{defi}

By definition, we have the following decompositions:
$$
\N=\N_{\bar0}\oplus\N_{\bar1},
$$
where
$$
\N_{\bar0}=\bigoplus_{n\in\z}\cc L_n\oplus\cc  {\bf c},\quad \N_{\bar1}=\bigoplus_{r\in\z+\frac12} \cc G_r.
$$
It is clear that the even part $\N_{\bar 0}$ is isomorphic to the well-known Virasoro algebra $\mathrm{Vir}$. The Neveu-Schwarz algebra $\N$ has a $\frac12\z$-grading by the eigenvalues of the adjoint action of $L_0$.
Then  $\N$ possesses the following triangular decomposition:
$$
\N=\N_{+}\oplus \N_{0}\oplus \N_{-},
$$
where
$$
\N_\pm=\bigoplus_{n\in\z_+}\cc L_{\pm n}\oplus \bigoplus_{r\in\mathbb{N}+\frac{1}2}\cc G_{\pm r},\quad \N_{0}=\cc L_0\oplus\cc {\bf c}.
$$

\begin{defi}
If $W$ is a $\N$-module on which  ${\bf c}$ acts as a complex
scalar $\ell$,  we say that $W$ is of central charge $\ell$.
\end{defi}

\begin{defi}
A $\N$-module $W$ is called  restricted in the sense that for every $w\in W$,
$$L_iw=G_{i-\frac{1}{2}}w= 0$$
for $i$ sufficiently large.
\end{defi}

Let
\begin{eqnarray*}
\fb&=&\bigoplus_{i\geq 0}\cc L_i \oplus \bigoplus_{i\geq 1}\cc G_{i-\frac12}.
\end{eqnarray*}
It is clear that $\fb$ is a subalgebra of $\N$.

Given
a $\fb$-module $V$ and $\ell\in \cc$, consider the corresponding induced module
$$
\Ind(V) :=U(\N)\otimes_{U(\fb)} V
$$
and denote
$$
\Ind_\ell(V)=\Ind(V)/({\bf c}-\ell)\Ind(V).
$$

Denote by  $\mathbb{M}$ the set of all infinite vectors of the form $\mi:=(\ldots, i_2, i_1)$ with entries in $\mathbb N$,
satisfying the condition that the number of nonzero entries is finite, and $\mathbb{M}_1:=\{\mathbf{i}\in \mathbb{M}\mid i_k=0, 1, \ \forall k\in\z_+\}$.

Let $\mathbf{0}$ denote the element $(\ldots, 0, 0)\in\mathbb{M}$ and
for
$i\in\z_+$ let $\epsilon_i$ denote the element $(\ldots,0,1,0,\ldots,0)\in\mathbb{M}$,
where $1$ is
in the $i$'th  position from right. For any $\mi\in\mathbb{M}$, we write
$${\rm w}(\mi)=\sum_{k\in\z_+}k\cdot i_k,$$
which is a nonnegative integer. For any nonzero  $\mi\in\mathbb{M}$, let $p$ be the smallest integer such that $i_p \neq0$ and define  $\mi^\prime=\mi-\epsilon_p$.

\newpage

\begin{defi}\rm
 Denote by $\prec$ the  {\em reverse  lexicographical  total order}  on  $\mathbb{M}$,  defined as follows: for any $\mi,\mk\in\mathbb{M}$, set $$\mi\prec \mathbf{j}\ \Leftrightarrow \ \mathrm{ there\ exists} \ r\in\z_+ \ \mathrm{such \ that} \ i_r<j_r\ \mathrm{and} \ i_s=j_s,\ \forall\; 1\le s<r.$$
Now we can induce a {\em principal total order} on $\mathbb{M}\times\mathbb{M}_1$, still denoted by $\prec$:
$(\mi,\mk) \prec (\mathbf{j},\mathbf{m})$ if and only if one of the following condition is satisfied:

(1) $\rm{w}(\mi)+\rm{w}(\mk)<\rm{w}(\mj)+\rm{w}(\mathbf{m})$;

(2) $\rm{w}(\mi)+\rm{w}(\mk)=\rm{w}(\mj)+\rm{w}(\mathbf{m})$ and $\mk \prec \mathbf{m}$;

(3) $\rm{w}(\mi)+\rm{w}(\mk)=\rm{w}(\mj)+\rm{w}(\mathbf{m})$ and $\ \mk=\mathbf{m} \
\mathrm{and }\ \mi \prec \mathbf{j},\quad \forall\mi,\mathbf{j}\in\mathbb{M}, \mk, \mathbf{m}\in\mathbb{M}_1$.
\end{defi}

Let $V$ be a simple $\fb$-module. For $\mk\in\mathbb{M}_1, \mi\in\mathbb{M}$, we denote
$$G^{\mk} L^{\mi}=\ldots G_{-2+\frac{1}{2}}^{k_2} G_{-1+\frac{1}{2}}^{k_1}\ldots L_{-2}^{k_2} L_{-1}^{k_1}\in U(\N_{-}).$$
According to the $\mathrm{PBW}$ Theorem, every element of $\mathrm{Ind}_\ell(V)$ can be uniquely written in the
following form
\begin{equation}\label{def2.1}
\sum_{\mk\in\mathbb{M}_1,\mi\in\mathbb{M}}G^{\mk} L^{\mi} v_{\mk,\mi},
\end{equation}
where all  $v_{\mk,\mi}\in V$ and only finitely many of them are nonzero. For any $v\in\mathrm{Ind}_\ell(V)$ as in  \eqref{def2.1}, we denote by $\mathrm{supp}(v)$ the set of all $(\mi,\mk)\in \mathbb{M}\times \mathbb{M}_1$  such that $v_{\mk, \mi}\neq0$.
 For a nonzero $v\in \mathrm{Ind}_\ell(V)$, we write $\mathrm{deg}(v)$  the maximal (with respect to the principal total order on $\mathbb{M}\times\mathbb{M}_1$) element in $\mathrm{supp}(v)$, called the { degree} of $v$. Note that here and later
 we make the
convention that  $\mathrm{deg}(v)$ only for  $v\neq0$.

\section{Construction of simple restricted $\N$-modules}

In this section, we give a construction of simple restricted $\N$-modules.

\begin{theo}\label{simple-theo}
Let $V$ be a simple $\fb$-module and  assume that there exists  $t\in\z_+$ satisfying the following two conditions:
\begin{itemize}
\item[(a)] the action of  $L_{t}$   on  $V$ is injective;
\item[(b)] $L_iV=0$ for all $i>t$.
\end{itemize}
Then
\begin{itemize}
\item[(i)] $G_{j-\frac12}V=0$ for all $j>t$.
\item[(ii)] For $\ell\in\cc$, the induced module $\mathrm{Ind}_\ell(V)$ is a simple $\N$-module.
\end{itemize}
\end{theo}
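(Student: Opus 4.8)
The plan is to deduce (i) from hypothesis (b) and the simplicity of $V$ (hypothesis (a) is in fact not needed for (i)), and then to prove (ii) by a minimal-degree argument in the spirit of Mazorchuk--Zhao, using (i), (b) and (a).

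\textbf{Part (i).} The starting point is that $\operatorname{Ann}_\fb(V)=\{x\in\fb:xV=0\}$ is an ideal of $\fb$: if $xV=0$ then for $z\in\fb$, $v\in V$ one has $[z,x]v=z(xv)-(-1)^{|z||x|}x(zv)=0$. By (b) this ideal contains $L_i$ for all $i>t$, hence also contains $[G_{1/2},L_i]=\tfrac{1-i}{2}G_{i+\frac12}$ for $i>t$; since $t\ge 1$ makes $\tfrac{1-i}{2}\ne 0$, we get $G_{j-\frac12}V=0$ for all $j\ge t+2$. It remains to prove $G_{t+\frac12}V=0$. Put $X=G_{t+\frac12}$. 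Using the vanishing just obtained together with (b), the commutation relations give, as operators on $V$: $L_mX=XL_m$ for $m\ge 1$ (because $[L_m,G_{t+\frac12}]\in\cc\, G_{m+t+\frac12}$ and $m+t+\tfrac12\ge t+\tfrac32$); $XL_0=(L_0+t+\tfrac12)X$; $XG_s=-G_sX$ for $s\ge\tfrac12$ (because $[G_s,G_{t+\frac12}]\in\cc\, L_{s+t+\frac12}$ and $s+t+\tfrac12>t$); and $X^2=L_{2t+1}=0$. The first three identities show that $\ker(X|_V)$ is a $\fb$-submodule of $V$, so by simplicity it equals $0$ or $V$; it cannot be $0$, since an injective operator with $X^2=0$ forces $V=0$. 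Hence $\ker(X|_V)=V$, i.e.\ $G_{t+\frac12}V=0$, proving (i).

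\textbf{Part (ii).} By PBW, $\Ind_\ell(V)\cong U(\N_-)\otimes V\ne 0$ and every nonzero element has a well-defined degree in $\mathbb{M}\times\mathbb{M}_1$; the principal order is a well-order there (the weight $\mathrm{w}(\mi)+\mathrm{w}(\mk)$ takes values in $\bN$ with finite fibres), so a nonzero $\N$-submodule $M$ contains a nonzero $v$ of minimal degree $(\mi,\mk)$, with leading coefficient $v_{\mk,\mi}\ne 0$. I claim $(\mi,\mk)=(\mathbf 0,\mathbf 0)$, which suffices: then $0\ne v\in V\cap M$, so $V\cap M$ is a nonzero $\fb$-submodule of the simple module $V$, whence $V\subseteq M$ and $M=U(\N)V=\Ind_\ell(V)$. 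Suppose $(\mi,\mk)\ne(\mathbf 0,\mathbf 0)$. If $\mi\ne\mathbf 0$, let $p$ be the smallest index with $i_p\ne 0$ and act by $L_{t+p}\in\N_+$; if $\mi=\mathbf 0$ (so $\mk\ne\mathbf 0$), let $q$ be the smallest index with $k_q\ne 0$ and act by $G_{t+q-\frac12}\in\N_+$. Since $t+p>t$, $L_{t+p}$ annihilates $V$ by (b); since $t+q-\tfrac12>t$, $G_{t+q-\frac12}$ annihilates $V$ by (i). Hence, when the chosen operator is commuted to the right through the PBW monomials, every term in which it reaches a coefficient in $V$ vanishes, so every surviving term strictly lowers the weight $\mathrm{w}(\mi)+\mathrm{w}(\mk)$; and the single bracket $[L_{t+p},L_{-p}]=(2p+t)L_t$ (resp.\ $[G_{t+q-\frac12},G_{-q+\frac12}]=2L_t$, using $t\ge 1$) contributes to the slot $(\mi-\epsilon_p,\mk)$ (resp.\ $(\mathbf 0,\mk-\epsilon_q)$) the term $i_p(2p+t)\,G^{\mk}L^{\mi-\epsilon_p}(L_tv_{\mk,\mi})$ (resp.\ $2\,G^{\mk-\epsilon_q}(L_tv_{\mk,\mathbf 0})$), which is nonzero by (a). Thus the resulting vector is a nonzero element of $M$ of degree $\prec(\mi,\mk)$, contradicting minimality; so $(\mi,\mk)=(\mathbf 0,\mathbf 0)$, and (ii) follows.

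The step requiring genuine care --- the main obstacle --- is the bookkeeping just indicated: one must check, using (b) and (i) to discard all terms in which some $L_j$ or $G_r$ with $j,r>t$ reaches $V$, and a weight count to control the remaining ones, that the leading coefficient in the slot $(\mi-\epsilon_p,\mk)$ (resp.\ $(\mathbf 0,\mk-\epsilon_q)$) really is $i_p(2p+t)L_tv_{\mk,\mi}$ (resp.\ $2L_tv_{\mk,\mathbf 0}$), i.e.\ that it is not cancelled by contributions arising from the lower-degree part of $v$; here one uses that the principal order ranks weight first, then $\mk$, then $\mi$, and that the only way a lower-degree piece could feed that slot is through a bracket with the same weight-cost, which forces it to come from a component of $v$ of weight $\ge \mathrm{w}(\mi)+\mathrm{w}(\mk)$. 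This is carried out exactly as in the Virasoro case of Mazorchuk and Zhao, the odd generators contributing only sign bookkeeping.
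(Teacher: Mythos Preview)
Your proof is correct and follows the same overall strategy as the paper: deduce (i) from simplicity, then prove (ii) by a minimal-degree argument producing from any $v\notin V$ an element of strictly smaller degree via the injectivity of $L_t$. Two points of difference are worth noting.

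For (i), the paper shows that the \emph{image} $W=G_{j+\frac12}V$ is a proper $\fb$-submodule (proper because $G_{j+\frac12}^2=0$) and hence zero, treating all $j\ge t$ at once; you instead use the annihilator ideal to handle $j\ge t+2$ and then show $\ker(G_{t+\frac12}\!\mid_V)$ is a nonzero submodule. Both are short and valid.

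For (ii), the paper and you make opposite choices: the paper reduces $\mk$ first (acting by $G_{\hat k+t-\frac12}$ to reach degree $(\mi,\mk')$) and only when $\mk=\mathbf 0$ reduces $\mi$ by $L_{\hat i+t}$; you reduce $\mi$ first by $L_{t+p}$ and only when $\mi=\mathbf 0$ reduce $\mk$. Your order works, but it makes the bookkeeping you flag genuinely heavier: when $\mk\ne\mathbf 0$ the commutator $[L_{t+p},G^{\mk}]$ changes $\mk$, and one must check that every resulting term has weight drop strictly greater than $p$ (this uses $t\ge 1$ and the half-integer shift in the $G$-grades), so that the only weight-drop-$p$ contribution comes from a single bracket with some $L_{-a}$ and hence keeps $\mk$ fixed. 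The paper's order sidesteps this, since $\{G_{\hat k+t-\frac12},G_{-s+\frac12}\}=2L_{\hat k+t-s}$ immediately lands in the even part and the $\mk$-tiebreaker controls everything. Either route yields the claim.
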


\begin{proof}
(i)\ Assume $L_iV=0$ for all $i>t$.  For $j\geq t$, by $G_{j+\frac12}^2V=L_{2j+1}V=0$, we have $W=G_{j+\frac12}V$ is a proper subspace of $V$.   For $r\in\mathbb{Z}_+$, we have
$$
G_{r-\frac{1}{2}}W=G_{r-\frac{1}{2}}G_{j+\frac12}V=L_{r+j}V-G_{j+\frac12}G_{r-\frac{1}{2}}V\subset G_{j+\frac12}V=W,
$$
$$
2L_rW=[G_{r-\frac{1}{2}},G_{\frac{1}{2}}]G_{j+\frac12}V
=G_{r-\frac{1}{2}}G_{\frac{1}{2}}G_{j+\frac12}V+G_{\frac{1}{2}}G_{r-\frac{1}{2}}G_{j+\frac12}V\subset W
$$
It follows that $W$ is a proper submodule of $V$. Then $W=G_{j+\frac12}V=0$ for $j\geq t$ since $V$ is simple.

In order to prove (ii), we need the following claim.

{\bf Claim.}\ For any $v\in\ \mathrm{Ind}_\ell(V)\setminus V$, let $\mathrm{deg}(v)=(\mi,\mk)$, ${\hat{k}}=\mathrm{min}\{s:k_s\neq0\}$ if $\mk\neq\mathbf{0}$ and  $\hat{i}=\mathrm{min}\{s:i_s\neq0\}$ if $\mi\neq\mathbf{0}$.  Then

\begin{itemize}
\item[(1)] If $\mk\neq\mathbf{0}$, then $\hat{k}>0$ and $\mathrm{deg}(G_{{\hat{k}}+t-\frac{1}{2}}v)=(\mi,\mk^{\prime})$;
\item[(2)] If $\mk=\mathbf{0}, \mi\neq\mathbf{0}$,  then $\hat{i}>0$ and $\mathrm{deg}(L_{\hat{i}+t}v)=(\mi^{\prime}, \mathbf{0})$.
\end{itemize}

To prove this, we assume that
\begin{equation}
v=\sum_{\mm\in\mathbb{M}_1,\,\mj\in\mathbb{M}}G^{\mm} L^{\mj} v_{\mm,\mj},
\end{equation}
where all  $v_{\mm, \mj}\in V$ and only finitely many of them are nonzero.

{\rm (1)} It suffices to consider those $v_{\mathbf{j},\mathbf{m}}$
with $$G^{\mathbf{m}}L^{\mathbf{j}}v_{\mathbf{j},\mathbf{m}}\neq0.$$
Note that $G_{\hat{k}+t-\frac12}v_{\mathbf{j},\mathbf{m}}=0$ for any  $(\mathbf{j},\mathbf{m})\in \mathrm{supp}(v)$.
One can easily check that
$$G_{\hat{k}+t-\frac12}G^{\mathbf{m}}L^{\mathbf{j}}v_{\mathbf{j},\mathbf{m}}=[G_{\hat{k}+t-\frac12}, G^{\mathbf{m}}]L^{\mathbf{j}}v_{\mathbf{j},\mathbf{m}}\pm G^{\mathbf{m}}[G_{\hat{k}+t-\frac12},L^{\mathbf{j}}]v_{\mathbf{j},\mathbf{m}}.$$
Clearly $L_{t}v_{\mathbf{j},\mathbf{m}}\neq0$ by (a).

If
$$
{\rm w}(\mj)+{\rm w}(\mathbf{m})<{\rm w}(\mi)+{\rm w}(\mathbf{k}),
$$
then
$$
\deg G_{\hat{k}+t-\frac12}G^{\mathbf{m}}L^{\mathbf{j}}v_{\mathbf{j},\mathbf{m}}\prec (\mi, \mk').
$$

Now we suppose that ${\rm w}(\mj)+{\rm w}(\mathbf{m})={\rm w}(\mi)+{\rm w}(\mk)$ and $ (\mathbf{m},\rm{w}(\mathbf{m})) \prec(\mk,\rm{w}(\mk))$ and denote  $$\mathrm{deg}(G_{\hat{k}+t-\frac12}G^{\mathbf{m}}L^{\mathbf{j}}v_{\mathbf{j},\mathbf{m}})
=(\mathbf{j}_1,\mathbf{m}_1)\in \mathbb{M}\times \mathbb{M}_1.$$

Let $\hat{m}:=\mathrm{min}\{s:m_s\neq0\}>0$.
If $\hat{m}>\hat{k}$, then ${\rm w}(\mj_1)+{\rm w}(\mathbf{m}_1)<{\rm w}(\mi)+{\rm w}(\mathbf{k}')$ and then
$\mathrm{deg}(G_{\hat{k}+t-\frac12}G^{\mathbf{m}}L^{\mathbf{j}}v_{\mathbf{j},\mathbf{m}})
=(\mathbf{j}_1,\mathbf{m}_1)\prec(\mi,\mk^{\prime})$.
If $\hat{m}=\hat{k}$,
then $(\mathbf{j}_1,\mathbf{m}_1)=(\mathbf{i}, \mathbf{m}^{\prime})$.
Since $ \mathbf{m}^{\prime} \prec\mk^{\prime}$, we also have $(\mathbf{j}_1,\mathbf{m}_1)\prec(\mi,\mk^{\prime})$.

If ${\rm w}(\mj)+{\rm w}(\mathbf{m})={\rm w}(\mi)+{\rm w}(\mk)$ and $\mathbf{m}=\mk$, it is easy to see that
$$\mathrm{deg}([G_{\hat{k}+t}, G^{\mathbf{m}}]L^{\mathbf{j}}v_{\mathbf{j},\mathbf{m}})=
(\mathbf{j},\mk^{\prime})\preceq (\mi,\mk^{\prime}),$$
$$\mathrm{deg}(G^{\mathbf{m}}[G_{\hat{k}+t}, L^{\mathbf{j}}]v_{\mathbf{j},\mathbf{m}})=
(\mathbf{j}_1, \mathbf{m}_1)\prec (\mi, \mk'),$$
where the equality holds if and only if $\mathbf{j}=\mi$.

Combining all the arguments above we conclude that
$\mathrm{deg}(G_{\hat{k}+t-\frac12}v)=(\mi, \mk^{\prime})$, as desired.

{\rm (2)}  We consider   $v_{\mathbf{j},\mathbf{0}}$
with $$L_{{\hat{i}}+t}L^{\mathbf{j}}v_{\mathbf{j},\mathbf{0}}\neq0.$$
Since $L_{{\hat{i}}+t}v_{\mathbf{j},\mathbf{0}}=0$ for any  $(\mathbf{j},\mathbf{0})\in \mathrm{supp}(v)$,
then we have
$$L_{{\hat{i}}+t}L^{\mathbf{j}}v_{\mathbf{j},\mathbf{0}}=
[L_{{\hat{i}}+t}, L^{\mathbf{j}}]v_{\mathbf{j},\mathbf{0}}.$$

If $\mathbf{j}=\mi$, it is easy to get that
$$\mathrm{deg}(L_{{\hat{i}}+t}L^{\mathbf{j}}v_{\mathbf{j},\mathbf{0}})=
(\mathbf{j}^{\prime},\mathbf{0})=(\mi^{\prime},\mathbf{0}).$$

Now suppose that
\begin{equation}\mathrm{deg}(L_{{\hat{i}}+t}L^{\mathbf{j}}v_{\mathbf{j},\mathbf{0}})
=(\mathbf{j}_1, \mathbf{0}).\label{eq333}\end{equation}

If ${\rm w}(\mathbf{j})<{\rm w}(\mathbf{i})$, then ${\rm w}(\mathbf{j_1})\le {\rm w}(\mathbf{j})-\hat{i}<{\rm w}(\mathbf{i})-\hat{i}={\rm w}(\mathbf{i'})$, which shows that $(\mathbf{j}_1, \mathbf{0})\prec (\mathbf{i}, \mathbf{0})$.

If ${\rm w}(\mathbf{j})={\rm w}(\mi)$ and ${\mathbf{j}}\prec {\mi}$. Let $\hat{j}:=\mathrm{min}\{s:j_s\neq0\}>0$.
If $\hat{j}>{\hat{i}}$, we obtain ${\rm w}(\mathbf{j}^\prime)<{\rm w}(\mathbf{i})-{\hat{i}}={\rm w}(\mi^{\prime})$. If $\hat{j}={\hat{i}}$,
we can similarly check that $(\mathbf{j}_1, \mathbf{0})=(\mathbf{j}^{\prime}, \mathbf{0})$.
By $\mathbf{j}^{\prime}\prec \mi^{\prime}$, we have $\mathrm{deg}(L_{{\hat{j}}+t}L^{\mathbf{j}}v_{\mathbf{j},\mathbf{0}})
=(\mathbf{j}_1, \mathbf{0})\prec(\mi^{\prime},\mathbf{0})$.

Consequently, we conclude that
$\mathrm{deg}(L_{{\hat{j}}+t}v)=(\mi^{\prime},\mathbf{0})$.  This proves the claim.

Using the claim repeatedly, from any nonzero element $v\in\mathrm{Ind}_\ell(V)$ we can reach a nonzero element in
 $U(\N)v\cap V\neq0$, which implies that the simplicity of $\mathrm{Ind}_\ell(V)$.
\end{proof}

\begin{rema}{\rm In Theorem \ref{simple-theo}, note that the actions of  $L_i,G_{i-\frac12}$ on $\mathrm{Ind}_\ell(V)$  for  all $i>t$  are locally nilpotent. It follows that $\mathrm{Ind}_\ell(V)$ is a simple restricted  $\N$-module of central charge $\ell$.
}
\end{rema}

\section{Characterization of simple restricted $\N$-modules}

In this section, we give a characterization of simple restricted $\N$-modules of central charge $\ell$.

For  $t\in \mathbb{N}$, let
\begin{eqnarray*}
\m^{(t)} &=& \bigoplus_{m> t}\cc L_m \oplus \bigoplus_{m> t}\cc G_{m-\frac12},
\end{eqnarray*}
Note that $\m^{(0)}=\N_+$.

\begin{prop}\label{th2}
Let $S$ be a simple $\N$-module. Then the following conditions are equivalent:

\begin{itemize}
  \item[(1)]   There exists $t\in\z_+$ such that the actions of $L_i,G_{i-\frac{1}{2}}$ for all $i\ge t$ on $S$ are locally finite.
 \item[(2)]   There exists $t\in\z_+$ such that the actions of $G_{i-\frac{1}{2}}, L_i$ for all $i\ge t$ on $S$ are locally nilpotent.
 \item[(3)]  There exist $t\in\z_+$ such that  $S$ is a locally finite $\m^{(t)}$-module.
 \item[(4)]  There exist $t\in\z_+$ such that $S$ is a locally nilpotent $\m^{(t)}$-module.
 \item[(5)] $S$ is a highest weight module, or there exists $\ell\in \cc$, $t\in\z_+$ and  a simple $\fb$-module $V$ such that both conditions $(a)$ and $(b)$ in Theorem \ref{simple-theo} are satisfied  and
$S\cong\mathrm{Ind}_\ell(V)$.
\end{itemize}

\end{prop}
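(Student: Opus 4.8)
The plan is to establish the cycle of implications $(4)\Rightarrow(3)\Rightarrow(1)$, $(2)\Rightarrow(1)$, $(2)\Rightarrow(4)$, and then the substantive implications $(1)\Rightarrow(5)$ and $(5)\Rightarrow(2)$; the easy ones follow from the Remarks in Section~2 (local nilpotence implies local finiteness) and from the fact that $\m^{(t)}$ is finitely generated as a Lie superalgebra, so local nilpotence of the generators $L_{t+1},G_{t+\frac12}$ forces local nilpotence of $\m^{(t)}$, and conversely. The real content is $(1)\Rightarrow(5)$ and $(5)\Rightarrow(2)$, so I describe those.

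For $(5)\Rightarrow(2)$: if $S$ is a highest weight module, the conclusion is classical since $\N_+$ acts locally nilpotently. If $S\cong\mathrm{Ind}_\ell(V)$ with $V$ a simple $\fb$-module satisfying $(a),(b)$ of Theorem~\ref{simple-theo}, then by the Remark following Theorem~\ref{simple-theo} the actions of $L_i$ and $G_{i-\frac12}$ on $\mathrm{Ind}_\ell(V)$ are locally nilpotent for all $i>t$; taking $t+1$ in place of $t$ gives $(2)$.

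The main work is $(1)\Rightarrow(5)$. Assume the actions of $L_i,G_{i-\frac12}$ for $i\ge t$ are locally finite on $S$; I may enlarge $t$ freely. First I would pick a nonzero weight-like vector: consider the subspace $S'=\{w\in S:\m^{(t)}w\text{ spans a finite-dimensional space}\}$, which by $(1)$ (after passing through $(3)$) is all of $S$, and then use a standard argument — since $\m^{(t)}$ is a finitely generated solvable Lie superalgebra acting locally finitely on $S\ne 0$, the even part contains an element acting locally finitely, and one extracts a common eigenvector, or more directly one shows $L_t$ acts locally finitely and hence has an eigenvalue. Let $S^{0}$ be a nonzero generalized eigenspace for a suitable large $L_t$; the key dichotomy is whether $L_{t}$ acts nilpotently on some nonzero vector killed by all $L_i,G_{i-\frac12}$ with $i>t$ (leading, after further analysis of the Virasoro-type relations, to a highest weight vector and hence $S$ highest weight), or whether one can arrange a vacuum-like subspace on which $L_t$ acts injectively. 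In the second case, set $V=\{w\in S:L_iw=G_{i-\frac12}w=0\ \forall i>t\}$; using $[L_t,L_{t}]=0$, $[G_{t+\frac12},G_{t+\frac12}]=2L_{2t+1}$ and the other relations one checks $V$ is $\fb$-stable, nonzero, and that $L_t$ acts injectively on it (shrinking $V$ to a simple $\fb$-submodule if necessary, which is possible because $\fb$ acts and $V$ is built from a locally finite situation). Conditions $(a),(b)$ of Theorem~\ref{simple-theo} then hold, so $\mathrm{Ind}_\ell(V)$ is simple, and the canonical map $\mathrm{Ind}_\ell(V)\to S$ induced by $V\hookrightarrow S$ is a nonzero $\N$-homomorphism between simple modules, hence an isomorphism.

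I expect the main obstacle to be the construction and verification of the $\fb$-module $V$ in the non-highest-weight case: one must show $V\ne 0$ (this needs the local finiteness in $(1)$ together with a careful choice of $t$ so that $L_t$ is injective on the relevant space, invoking the structure of the Virasoro subalgebra and the super-bracket $[G_r,G_s]=2L_{r+s}+\cdots$ to propagate vanishing from $L_i,\,i>2t$ down to $i>t$, as in the proof of part (i) of Theorem~\ref{simple-theo}), that $V$ is $\fb$-invariant, and that passing to a simple $\fb$-submodule of $V$ preserves the injectivity of $L_t$. Separating the highest-weight case from the induced-module case cleanly — i.e., showing these are the only two possibilities — is the delicate point, and it hinges on analyzing the action of the finite-dimensional solvable superalgebra $\m^{(t)}/(\text{large part})$ and ruling out intermediate behaviour using simplicity of $S$.
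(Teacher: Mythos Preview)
Your overall skeleton matches the paper's, but there are two genuine gaps in the step $(1)\Rightarrow(5)$.

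\textbf{Producing the annihilated subspace.} Showing that $N_t=\{w\in S:L_iw=G_{i-\frac12}w=0\ \text{for all }i>t\}$ is nonzero for some $t$ is the heart of the proof, and your sketch (``extract a common eigenvector for a solvable superalgebra acting locally finitely'') is not a proof: $\m^{(t)}$ is infinite-dimensional, you need annihilation (eigenvalue $0$) rather than an arbitrary eigenvalue, and there is no off-the-shelf Lie-type theorem to invoke here. The paper's route is quite different and concrete. It first restricts $S$ to the Virasoro subalgebra, chooses a simple $\mathrm{Vir}$-submodule, and invokes the Mazorchuk--Zhao classification \cite{MZ} as a black box to obtain a nonzero $w$ with $L_nw=0$ for all $n\ge t'$. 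It then uses local finiteness of $L_{t'}$ to show that $\sum_{i\ge 0}\cc\,G_{t'+i+\frac12}w$ is finite-dimensional, from which it builds a finite-dimensional $\m^{(t')}$-stable subspace $V'$. Finally a minimality trick --- take a shortest relation $(a_0G_{m-\frac12}+\cdots+a_nG_{m+n-\frac12})V'=0$ and apply $L_{2m-1}$ --- forces $G_{m-\frac12}V'=0$, hence $N_{\tilde k}\ne 0$ for large $\tilde k$. None of this machinery appears in your proposal.

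\textbf{The isomorphism at the end.} You want to shrink $V$ to a simple $\fb$-submodule, then cite Theorem~\ref{simple-theo} to get $\mathrm{Ind}_\ell(V)$ simple, and conclude by ``nonzero map between simples is an isomorphism''. The gap is that an arbitrary $\fb$-module need not contain any simple submodule, so the shrinking step is unjustified. The paper avoids this entirely: it takes $V=N_s$ with $s$ minimal (so that $L_s$ is injective on $V$, by minimality), forms the canonical surjection $\pi:\mathrm{Ind}_\ell(V)\to S$, and shows $\ker\pi=0$ by directly reusing the degree-lowering Claim from the proof of Theorem~\ref{simple-theo}. That Claim requires only the injectivity of $L_s$ and the vanishing conditions on $V$, \emph{not} simplicity of $V$. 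Simplicity of $V$ is then deduced \emph{a posteriori} from the isomorphism $S\cong\mathrm{Ind}_\ell(V)$.

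A minor point: your direct implication $(2)\Rightarrow(4)$ (``local nilpotence of the generators forces local nilpotence of $\m^{(t)}$'') is not as automatic as you suggest for an infinite-dimensional algebra acting on an infinite-dimensional module; the paper sidesteps this by routing $(2)\Rightarrow(1)\Rightarrow(5)\Rightarrow(4)$.
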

\begin{proof}
First we prove $(1)\Rightarrow(5)$. Suppose that $S$ is a simple $\N$-module and there exists $t\in\z_+$ such that the actions of
$G_{i-\frac{1}{2}}, L_i, i>t$ are locally finite.

Choose a simple $\mathrm{Vir}$-submodule $S'$ of $S$. Clearly $L_i, i\ge t$ are locally finite on $S'$. By Proposition 4 in \cite{MZ}, there exist $t'\in\z_+\, (t'\ge t)$  and a simple ${\mathrm{Vir}_+}$-module $W$ such that $S'={\rm Ind}(W)$ as Virasoro module and $L_nW=0$ for all $n\ge t'$, where ${\mathrm{Vir}_+}:=\bigoplus_{m>0}\cc L_m$.

Then we can choose a nonzero $w\in W$ such that $L_nw=0$ for all $n\ge t'$.

Take any $j\in\z$ with $j>t'$ and we denote
 \begin{eqnarray*}
 && V_G=\sum_{m\in\mathbb N}\cc L_{t'}^mG_{j-\frac{1}{2}}w=U(\cc L_{t'})G_{j-\frac{1}{2}}w,
 \end{eqnarray*}
 which are all finite-dimensional. By Definition \ref{AD}, it is clear that $G_{j+(m+1)t'-\frac{1}{2}}w\in V_G$ if $G_{j+mt'-\frac{1}{2}}w\in V_G$.
By induction on $m$, we obtain $G_{j+mt'-\frac{1}{2}}w\in V_G$ for all $m\in \mathbb N$.
So $\sum_{m\in\mathbb N}\cc G_{j+mt'-\frac{1}{2}}w$ are
finite-dimensional for any $j>t'$, and then
 \begin{eqnarray*}
\sum_{i\in\z_+}\cc G_{t'+i-\frac{1}{2}}w=\cc G_{t'+\frac{1}{2}}w+\sum_{j=t'+1}^{2t'}\Big(\sum_{m\in\mathbb N}\cc G_{j+mt'-\frac{1}{2}}w\Big)
\end{eqnarray*}
 is finite-dimensional.

Now we can take $p\in\z_+$ such that
\begin{equation}
\sum_{i\in\z_+}\cc G_{t'+i-\frac{1}{2}}w\!=\!\sum_{i=0}^{p}\cc  G_{t'+i+\frac{1}{2}}w.
\end{equation}
Set $V^\prime:=\sum_{\imath_0,\ldots,\imath_p\in \{0, 1\}}\cc G_{t'+\frac{1}{2}}^{\imath_0}\cdots G_{t'+p+\frac{1}{2}}^{\imath_k}w$, which is finite-dimensional by (1). Moreover $V^\prime$ is a finite-dimensional $\m^{(t')}$-module.

It follows that we can  choose
 a minimal $n\in\mathbb N$ such that
 \begin{equation}\label{lm3.3}
 (a_0G_{m-\frac12}+a_1G_{m+1-\frac12}+\cdots + a_{n}G_{m+{n}-\frac12})V^\prime=0
 \end{equation}
 for some $m> t'$ and  $a_i\in \cc$.
 Applying $L_{2m-1}$ to \eqref{lm3.3}, one has
$$(a_1[L_{2m-1},G_{m+\frac12}]+\cdots +a_{n}[L_{2m-1},G_{m+{n}-\frac12}])V^\prime=0$$ since $L_{2m-1}V^\prime\subset V^\prime$.
Then $$(a_1G_{3m-\frac12}+\cdots +na_{n}G_{3m+{n}-1})V^\prime=0,$$
which implies $n=0$, that is, \begin{equation}G_{m-\frac12}V^\prime=0. \label{eq111}\end{equation}
By action of $L_i$ on (\ref{eq111})\begin{equation}G_{m+i-\frac{1}{2}}V^\prime=0, \ \forall i>t'.\end{equation}

 For any $\tilde{k}\in \mathbb N$, we consider the vector space
 $$N_{\tilde{k}}=\{v\in S\mid G_{k-\frac{1}{2}}v=L_kv=0 \quad \mbox{for\ all}\ k>\tilde{k}\}.$$
Clearly, $N_{{\tilde{k}}}\neq0$ for sufficiently large $\tilde{k}\in\mathbb N$.
Thus we can find a smallest nonnegative integer, saying $s$, with $V:=N_{s}\neq 0$.
Using $k>s$ and $p\geq 1$, it follows from $k+p-\frac{1}{2}> s$ and $k+p-1> s$  that we can easily check that
$$L_{k}(G_{p-\frac{1}{2}}v)=(p-\frac{k+1}{2})G_{k+p-\frac{1}{2}}v=0$$
and
$$
G_{k-\frac{1}{2}}(G_{p-\frac{1}{2}}v)=2L_{k+p-1}v=0,
$$
for any $v\in V$, respectively.
Clearly, $G_{p-\frac{1}{2}}v\in V$ for
all $p\geq 1$. Similarly, we can also obtain $L_iv\in V$
for all $i\in \mathbb N$. Therefore, $V$ is a $\fb$-module.

If $s=0$, then by Theorem 1\,(c) in \cite{MZ2} and the proofs in Theorem \ref{simple-theo} (i) , $S$ is a highest weight module.

If $s\ge1$, by the definition of $V$, we can obtain that the action of  $L_{s}$  on $V$ is injective by Theorem \ref{simple-theo}. Since $S$ is simple
and generated by $V$, then there exists a canonical surjective map
$$\pi:\mathrm{ Ind}(V) \rightarrow S, \quad \pi(1\otimes v)=v,\quad \forall  v\in V.$$
Next we only need to show that $\pi$ is also injective, that is to say, $\pi$ as the canonical map is bijective.  Let $K=\mathrm{ker}(\pi)$. Obviously, $K\cap V=0$. If $K\neq0$, we can choose a nonzero vector $v\in K\setminus V$ such that $\mathrm{deg}(v)=(\mi,\mk)$ is minimal possible.
Note that $K$ is a $\N$-submodule of $\mathrm{Ind}_\ell(V)$.
By the claim in proof of Theorem \ref{simple-theo} we can create a new vector $u\in K$  with $\mathrm{deg}(u)\prec(\mi,\mk)$, which is a contradiction. This forces $K=0$, that is, $S\cong \mathrm{Ind}_\ell(V)$. According to  the property of induced modules, we see that $V$ is simple as a $\fb$-module.

Moreover, $(5)\Rightarrow(3)\Rightarrow(1)$, $(5)\Rightarrow(4)\Rightarrow(2)$
 and $(2)\Rightarrow(1)$ are clear. This completes the proof.
\end{proof}

\begin{lemm}\label{RL}
Let $V$ be a simple restricted $\N$-module. Then there exists $t\in\z_+$ such that the actions of $L_i, G_{i-\frac{1}{2}}$ for all $i\ge t$ on $V$ are locally nilpotent.
\end{lemm}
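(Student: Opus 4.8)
The plan is to reduce this to the analogous statement for the Virasoro algebra. Let $V$ be a simple restricted $\N$-module. Restricting $V$ to the even subalgebra $\N_{\bar 0}\cong\mathrm{Vir}$, it is still a restricted $\mathrm{Vir}$-module (for every $w\in V$, $L_iw=0$ for $i$ large by the definition of restricted $\N$-module), although it need not be simple. Nevertheless, the known result for the Virasoro algebra (the relevant part of \cite{MZ}, or the reference \cite{MZ2} used in the proof of Proposition \ref{th2}) shows that on any restricted $\mathrm{Vir}$-module the operators $L_i$ for $i$ large act locally nilpotently; more precisely, for each $w\in V$ there is $N(w)$ with $L_i^nw$-type expressions vanishing. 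The first step is therefore to record that there is a single $t_0\in\z_+$ such that $L_i$ acts locally nilpotently on $V$ for all $i\ge t_0$ — this can be extracted either directly from restrictedness together with a $\mathrm{PBW}$/weight-type argument, or by invoking the Virasoro case.

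The second step is to propagate local nilpotency from the $L_i$ to the $G_{i-\frac12}$. Here the key identity is $G_{r}^2=\tfrac12[G_r,G_r]=L_{2r}$ (up to the central term, which is irrelevant for $r\ne0$), so that $G_{i-\frac12}^{2}=L_{2i-1}$ acting on $V$. Consequently, if $L_{2i-1}$ is locally nilpotent on $V$ — which holds for all $i$ with $2i-1\ge t_0$, i.e.\ $i\ge\frac{t_0+1}{2}$ — then for any $v\in V$ there is $n$ with $L_{2i-1}^nv=0$, hence $G_{i-\frac12}^{2n}v=0$, so $G_{i-\frac12}$ is locally nilpotent on $V$. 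Taking $t:=\max\{t_0,\lceil (t_0+1)/2\rceil\}=t_0$ (for $t_0\ge1$), we get that both $L_i$ and $G_{i-\frac12}$ act locally nilpotently on $V$ for all $i\ge t$, which is exactly the assertion.

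I do not expect a serious obstacle here; the lemma is essentially a bootstrapping statement. The one point requiring care is the passage from ``$L_i$ is locally nilpotent for each individual $i\ge t_0$'' to a usable uniform statement, and the fact that restrictedness of $V$ as an $\N$-module does give restrictedness (hence, by the Virasoro theory, local nilpotency of the positive $L_i$'s) on the $\mathrm{Vir}$-module obtained by restriction — one must make sure the relevant Virasoro result is about restricted modules rather than only about simple ones, since $V|_{\mathrm{Vir}}$ may be reducible. If one prefers to avoid citing the Virasoro case, the alternative is a direct argument: fix $v\in V$, pick $M$ with $L_jv=G_{j-\frac12}v=0$ for all $j>M$; then a monomial $L_i^n v$ (for $i\ge 1$) lies in the span of weight-type vectors whose $L_0$-eigenvalue grows with $n$ while staying inside the finitely generated submodule $U(\N_{\le 0})\cdot(\text{finite set})$, forcing $L_i^nv=0$ for $n$ large, uniformly in $i$ over a bounded range modulo $i$; then combine with the square identity as above. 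Either route closes the proof.
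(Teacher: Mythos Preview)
Your proposal has a genuine gap in its primary route, and the alternative route---while pointing in the right direction---is not carried out adequately.

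\textbf{The Virasoro reduction does not work as stated.} The results in \cite{MZ} and \cite{MZ2} that you cite are about \emph{simple} $\mathrm{Vir}$-modules, and you correctly flag that $V|_{\mathrm{Vir}}$ need not be simple. This is not a minor technicality: for a general restricted (non-simple) $\mathrm{Vir}$-module the conclusion you want can fail. For example, one can induce from a $\mathrm{Vir}_{\ge 0}$-module $U=\bigoplus_{k\ge 2}U_k$ where $L_jU_k=0$ for $j\ge k$ but $L_{k-1}$ acts non-nilpotently on $U_k$; the induced module is restricted, yet no single $t$ makes all $L_i$ with $i\ge t$ locally nilpotent. So this route cannot be closed by a citation.

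\textbf{Your direct alternative is the right idea but the sketch is flawed.} You write about ``$L_0$-eigenvalues'' and ``weight-type vectors'', but $V$ need not be a weight module, so these are undefined. More seriously, your argument only produces $L_i^n v=0$ for the \emph{single} vector $v$; the lemma requires this for \emph{every} $w\in V$, and it is precisely here that simplicity of $V$ as an $\N$-module (not as a $\mathrm{Vir}$-module) must enter.

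\textbf{What the paper does.} The paper's proof is the direct PBW argument: pick $0\ne v\in V$ and $s$ with $L_iv=G_{i-\frac12}v=0$ for $i\ge s$; simplicity gives $V=U(\N)v$, so any $w$ is a finite combination of PBW monomials in generators of index $<s$ applied to $v$. Setting $t=s$, for $i\ge s$ one repeatedly commutes $L_i$ (resp.\ $G_{i-\frac12}$) to the right; each commutation raises indices, and after enough iterations every surviving term has a factor of index $\ge s$ hitting $v$, hence vanishes. Thus $L_i^Nw=G_{i-\frac12}^Nw=0$ for $N$ large.

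Your observation $G_{i-\frac12}^{\,2}=L_{2i-1}$ is a genuinely nice shortcut for the odd generators, and the paper does not use it; but it only helps once the even case is properly established, and for that you must use the simplicity of $V$ over $\N$ as above rather than trying to descend to $\mathrm{Vir}$.
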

\begin{proof}Let $0\neq v\in V$, there exists $s\in \mathbb{Z}_+$ such that $L_iv=G_{i-\frac{1}{2}}v=0$ for all $i\geq s$.
For $V=U(\mathfrak{g})v$, every element $w$ of $V$ can be uniquely written in the
following form
$$
w=\sum_{\mk\in\mathbb{M}_1,\mi\in\mathbb{M}}G^{\mk} L^{\mi}v.
$$
Then, for $i\geq s$, there exists  $N$ sufficiently large such that
$$
L_i^Nw=G_{i-\frac{1}{2}}^Nw=0.
$$

\end{proof}

From  Theorem \ref{th2} and Lemma \ref{RL}, we are in a position to state our main result.

\begin{theo}\label{MT}
  Let $\ell\in\cc$, every simple restricted $\N$-module of central charge $\ell$ is isomorphic to
a  simple highest weight module, or  a simple module of the form $\mathrm{Ind}_\ell(V)$, where $V$ is a simple $\fb^{(t)}$-module for some $t\in\mathbb{Z}_+$, where
   $\fb^{(t)}=\fb/\m^{(t)}$ is the quotient algebra of $\fb$ by $\m^{(t)}$.
\end{theo}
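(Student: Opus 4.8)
The plan is to assemble Theorem \ref{MT} by combining the results already in hand, namely Lemma \ref{RL}, Proposition \ref{th2}, and Theorem \ref{simple-theo}, together with the observation identifying $\fb$-modules with conditions (a)--(b) of Theorem \ref{simple-theo} with modules over the finite-dimensional quotients $\fb^{(t)}$. First I would let $S$ be a simple restricted $\N$-module of central charge $\ell$. By Lemma \ref{RL}, there exists $t\in\z_+$ such that the actions of $L_i$ and $G_{i-\frac12}$ on $S$ are locally nilpotent for all $i\ge t$; in particular condition (2) of Proposition \ref{th2} holds. Invoking the equivalence $(2)\Rightarrow(5)$ in Proposition \ref{th2}, we conclude that either $S$ is a simple highest weight module, or there exist $\ell'\in\cc$, $t'\in\z_+$ and a simple $\fb$-module $V$ satisfying conditions (a) and (b) of Theorem \ref{simple-theo} with $S\cong\mathrm{Ind}_{\ell'}(V)$. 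Since ${\bf c}$ acts on $S$ as $\ell$ and ${\bf c}$ acts on $\mathrm{Ind}_{\ell'}(V)$ as $\ell'$, we must have $\ell'=\ell$.

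The remaining point is to translate the pair of conditions (a), (b) on the simple $\fb$-module $V$ into the statement that $V$ is (the inflation of) a simple module over the finite-dimensional solvable Lie superalgebra $\fb^{(t)}=\fb/\m^{(t)}$. Here I would argue as follows: condition (b) says $L_iV=0$ for all $i>t$, and part (i) of Theorem \ref{simple-theo} (applicable precisely because of (a) and (b)) gives $G_{j-\frac12}V=0$ for all $j>t$. Hence the ideal $\m^{(t)}=\bigoplus_{m>t}\cc L_m\oplus\bigoplus_{m>t}\cc G_{m-\frac12}$ annihilates $V$, so the $\fb$-action on $V$ factors through the quotient $\fb^{(t)}=\fb/\m^{(t)}$, and $V$ becomes a module over $\fb^{(t)}$ which is simple since it was simple over $\fb$. (One should check $\m^{(t)}$ really is an ideal of $\fb$, which is immediate from the bracket relations in Definition \ref{AD}: bracketing any generator $L_k$ or $G_{k-\frac12}$ of $\fb$ with $L_m$ or $G_{m-\frac12}$ for $m>t$ lands again in the span of $L_{m'}, G_{m'-\frac12}$ with $m'>t$, possibly plus the central ${\bf c}$, and ${\bf c}$ acts as a scalar — in fact the relevant bracket here is among positive modes so no central term arises.) Conversely, any simple $\fb^{(t)}$-module $V$ inflates to a simple $\fb$-module on which $\m^{(t)}$ acts trivially, so $L_iV=0$ for $i>t$ and the only thing to be verified for Theorem \ref{simple-theo} is injectivity of $L_t$; but this is exactly the nontrivial hypothesis, and Theorem \ref{MT} as stated quantifies over simple $\fb^{(t)}$-modules for which $\mathrm{Ind}_\ell(V)$ makes sense, i.e. those with (a) holding, so no extra claim is needed in that direction.

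I do not expect any serious obstacle here: the theorem is essentially a repackaging of Proposition \ref{th2}(5) and Theorem \ref{simple-theo}, with the only genuinely new content being the bookkeeping identification of the relevant category of $\fb$-modules with $\fb^{(t)}\mbox{-}\mathrm{Mod}$. The mildest subtlety is making sure the value $t$ produced by Lemma \ref{RL} and the value $t'$ produced inside the proof of Proposition \ref{th2} are handled consistently — but since Proposition \ref{th2} already absorbs the passage $t\rightsquigarrow t'$ internally, one simply takes the $t$ furnished by its statement $(5)$, and there is no clash. A final remark worth including is that for fixed $\ell$ the correspondence $V\mapsto\mathrm{Ind}_\ell(V)$ is, up to the parity-change functor $\Pi$ and isomorphism, a bijection onto the simple restricted modules that are not highest weight, which is what justifies calling this a classification and sets up Section 5.
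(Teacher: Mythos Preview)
Your proposal is correct and matches the paper's approach exactly: the paper presents Theorem~\ref{MT} as an immediate consequence of Proposition~\ref{th2} and Lemma~\ref{RL} without writing out a separate proof, and your argument fills in precisely the bookkeeping (checking that $\m^{(t)}$ is an ideal of $\fb$ and that the $\fb$-action on $V$ factors through $\fb^{(t)}$) that the paper leaves implicit. If anything, your write-up is more detailed than what the paper provides.
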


\section{Simple $\fb^{(t)}$-modules and examples}

\subsection{Classifications}

In this section we mainly consider simple modules over the quotient algebra $\fb^{(t)}$ for $t\in\mathbb N$.
 For $t=0$, the algebra $\fb^{(0)}$ is commutative and its simple modules are
one dimensional. Next we shall classify all simple $\fb^{(t)}$-module for $t=1,2$.

For $t=1$, $\fb^{(1)}=\fb^{(1)}_{\bar 0}\oplus \fb^{(1)}_{\bar 1}$ is a 3-dimensional sovable Lie superalgebra with $\fb^{(1)}_{\bar 0}=\cc L_0\oplus \cc L_1$ and $\fb^{(1)}_{\bar 1}=\cc G_{\frac{1}{2}}$. All simple $\fb^{(1)}_{\bar 0}$-modules are constructed by  R. Block in \cite{Bl}.

\begin{prop}\label{pp51}
Any simple $\fb^{(1)}$-module $V$ is isomorphic to $U\oplus G_{\frac12}U$, where $U$ is a simple $\fb^{(1)}_{\bar 0}$-module, up to parity-change. In particular, $V$ is  one-dimensional if $G_{\frac12}U=0$.
\end{prop}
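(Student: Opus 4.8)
The plan is to leverage that $\fb^{(1)}$ has a one-dimensional odd part $\cc G_{\frac12}$ satisfying $G_{\frac12}^{2}=L_{1}\in\fb^{(1)}_{\bar0}$ (this is exactly the bracket $[G_{\frac12},G_{\frac12}]=2L_{1}$ from Definition \ref{AD}). By the PBW theorem for Lie superalgebras this gives $U(\fb^{(1)})=U(\fb^{(1)}_{\bar0})\oplus U(\fb^{(1)}_{\bar0})G_{\frac12}$. Hence for any nonzero homogeneous $v\in V$ the cyclic submodule $U(\fb^{(1)})v$, which equals $V$ by simplicity, decomposes as $U(\fb^{(1)}_{\bar0})v+U(\fb^{(1)}_{\bar0})G_{\frac12}v$, with the two summands lying in complementary $\z_2$-graded components. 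Comparing graded pieces, every nonzero homogeneous vector of $V_{\bar0}$ already generates $V_{\bar0}$ over $\fb^{(1)}_{\bar0}$, and likewise for $V_{\bar1}$; so I obtain that \emph{whenever $V_{\bar0}\neq0$ it is a simple $\fb^{(1)}_{\bar0}$-module, and similarly $V_{\bar1}$ is simple whenever nonzero}.

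Next I would study the odd operator $G_{\frac12}$ and show that $W:=\ker G_{\frac12}$ is a graded $\fb^{(1)}$-submodule of $V$. It is clearly $\z_2$-graded and annihilated by $G_{\frac12}$, and using the brackets $[L_{0},G_{\frac12}]=-\frac12 G_{\frac12}$ and $[L_{1},G_{\frac12}]=0$ one verifies $G_{\frac12}(L_{0}w)=G_{\frac12}(L_{1}w)=0$ for all $w\in W$. By simplicity of $V$ this forces either $W=V$ or $W=0$, and I would split into these two cases.

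If $W=V$, then $L_{1}=G_{\frac12}^{2}$ annihilates $V$ and $G_{\frac12}$ acts as $0$; since $\cc L_{1}\oplus\cc G_{\frac12}$ is an ideal of $\fb^{(1)}$ whose quotient is the one-dimensional abelian (purely even) Lie algebra $\cc L_{0}$, the $\fb^{(1)}$-action on $V$ factors through $\cc L_{0}$, so $V$ is one-dimensional. This is the degenerate case, with $U=V$ (up to a parity flip) and $G_{\frac12}U=0$. If instead $W=0$, then $G_{\frac12}$ is injective on $V$, which forces both $V_{\bar0}\neq0$ and $V_{\bar1}\neq0$ (an injective odd operator cannot vanish on a module concentrated in a single parity), so by the first paragraph both $V_{\bar0}$ and $V_{\bar1}$ are simple $\fb^{(1)}_{\bar0}$-modules. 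Setting $U=V_{\bar0}$, a short computation using $[a,G_{\frac12}]\in\cc G_{\frac12}$ for $a\in\fb^{(1)}_{\bar0}$ shows that $G_{\frac12}U$ is an $\fb^{(1)}_{\bar0}$-submodule of $V_{\bar1}$, nonzero by injectivity, hence all of $V_{\bar1}$; therefore $V=V_{\bar0}\oplus V_{\bar1}=U\oplus G_{\frac12}U$ as $\fb^{(1)}$-modules. The parity-change clause absorbs the alternative of placing $U$ in the odd slot.

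The only delicate point, and the step that carries the argument, is the first paragraph: establishing $U(\fb^{(1)})=U(\fb^{(1)}_{\bar0})\oplus U(\fb^{(1)}_{\bar0})G_{\frac12}$ and correctly extracting from $V=U(\fb^{(1)}_{\bar0})v+U(\fb^{(1)}_{\bar0})G_{\frac12}v$ (for homogeneous $v$) the simplicity of $V_{\bar0}$ and of $V_{\bar1}$ over $\fb^{(1)}_{\bar0}$. Everything afterward is a brief submodule chase governed entirely by the explicit relations of Definition \ref{AD}.
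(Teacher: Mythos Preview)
Your proof is correct and somewhat more thorough than the paper's own. The paper argues very briefly: after disposing of the degenerate case $V_{\bar 1}=0$ (where $G_{\frac12}V=0$ forces $L_1V=0$ and hence $\dim V=1$), it simply \emph{chooses} a simple $\fb^{(1)}_{\bar0}$-submodule $U\subseteq V_{\bar0}$ and observes that $U+G_{\frac12}U$ is a $\fb^{(1)}$-submodule, hence all of $V$ by simplicity. Your route differs in two respects. First, your PBW decomposition $U(\fb^{(1)})=U(\fb^{(1)}_{\bar0})\oplus U(\fb^{(1)}_{\bar0})G_{\frac12}$ shows directly that $V_{\bar0}$ (when nonzero) is \emph{itself} a simple $\fb^{(1)}_{\bar0}$-module, so the paper's $U$ is forced to be $V_{\bar0}$; this sidesteps the question---left unaddressed in the paper---of why a simple even submodule of $V_{\bar0}$ should exist at all. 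Second, rather than splitting on whether a parity component vanishes, you split on $\ker G_{\frac12}\in\{0,V\}$, which is a cleaner structural dichotomy but lands on the same two outcomes. The paper's argument is terser; yours is more self-contained and fills in a point the paper glosses over.
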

\begin{proof}
Let $V$ be a simple $\fb^{(1)}$-module. If $V_{\bar1}=0$, then $G_{\frac12}V=0=L_1V$, then $V$ is a simple $\fb^{(0)}$-module. In this case $\dim V=1$.

So we can suppose that $V_{\bar0}\ne 0$ and $V_{\bar1}\ne 0$.
Choose a simple $\frak b_{\bar{0}}^{(1)}$-submodule $U$ of $V_{\bar{0}}$, then $U+G_{\frac12}U$ becomes a $\fb^{(1)}$-submodule of $V$, and then $V=U+G_{\frac12}U$.

%
%
%
%
\end{proof}

For $t=2$, $\fb^{(2)}=\fb^{(2)}_{\bar 0}\oplus \fb^{(2)}_{\bar 1}$ is a 5-dimensional solvable Lie superalgebra with $\fb^{(1)}_{\bar 0}=\cc L_0\oplus \cc L_1\oplus \cc L_2$ and $\fb^{(1)}_{\bar 1}=\cc G_{\frac{1}{2}}\oplus \cc G_{\frac32}$. All simple $\fb^{(2)}_{\bar 0}$-modules are constructed by Mazorchuk and Zhao in \cite{MZ}.

\begin{prop}
Any simple module over $\fb^{(2)}$ is isomorphic to $U\oplus G_{\frac12}U$, where $U$ is a simple $\fb^{(2)}_{\bar 0}$-module, up to parity-change.
\end{prop}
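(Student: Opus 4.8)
The strategy mirrors the proof of Proposition \ref{pp51}, but the bookkeeping is heavier because $\fb^{(2)}_{\bar 1}$ is two-dimensional. Let $V=V_{\bar 0}\oplus V_{\bar 1}$ be a simple $\fb^{(2)}$-module. If $V_{\bar 1}=0$ then $G_{\frac12}V=G_{\frac32}V=0$ and $V$ is a simple module over the (commutative on the relevant part) quotient; this case is trivial, so assume $V_{\bar 0}\ne 0$ and $V_{\bar 1}\ne 0$. First I would pick a simple $\fb^{(2)}_{\bar 0}$-submodule $U\subseteq V_{\bar 0}$, which exists because $\fb^{(2)}_{\bar 0}$ acts on the nonzero space $V_{\bar 0}$ and, by the results of Mazorchuk--Zhao \cite{MZ} cited just before the statement, every simple $\fb^{(2)}_{\bar 0}$-module is reasonably well understood; in any case one only needs the existence of a simple submodule, which follows since $L_1,L_2$ act locally nilpotently on $V_{\bar 0}$ in the relevant situations, or more simply by a standard argument using a generalized $L_0$-eigenvector. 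Then I would form $W:=U+G_{\frac12}U+G_{\frac32}U$ and show $W$ is a $\fb^{(2)}$-submodule; by simplicity $V=W$.

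\textbf{Key steps.} The heart of the argument is proving that $W$ is stable under $\fb^{(2)}$, and then that in fact $G_{\frac32}U\subseteq U+G_{\frac12}U$, so that $W=U\oplus G_{\frac12}U$ as claimed (the directness of the sum being a parity statement, since $U\subseteq V_{\bar 0}$ and $G_{\frac12}U\subseteq V_{\bar 1}$, and simplicity forces $G_{\frac12}U\cap U$ considerations to collapse appropriately). To see stability: $\fb^{(2)}_{\bar 0}=\cc L_0\oplus\cc L_1\oplus\cc L_2$ preserves $U$; acting on $G_{\frac12}U$ one computes $L_m G_{\frac12}u=[L_m,G_{\frac12}]u+G_{\frac12}L_m u=(\tfrac m2-\tfrac12)G_{m+\frac12}u+G_{\frac12}(L_mu)$, which for $m=0,1$ lands in $G_{\frac12}U+G_{\frac32}U\subseteq W$ and for $m=2$ gives a term $G_{\frac52}u=0$ (since $G_{\frac52}$ is zero in $\fb^{(2)}$), so $L_2 G_{\frac12}u=G_{\frac12}L_2u\in W$; similarly for $G_{\frac32}U$. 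For the odd generators, $G_{\frac12}\cdot U\subseteq W$ and $G_{\frac32}\cdot U\subseteq W$ by definition, while $G_r G_s u=\tfrac12[G_r,G_s]u=L_{r+s}u$ for $r,s\in\{\tfrac12,\tfrac32\}$ (the central term is irrelevant on the quotient), and $L_{r+s}u\in U$ unless $r+s>2$, in which case it is $0$; so $G_r(G_s U)\subseteq U\subseteq W$. This shows $W$ is a nonzero submodule, hence $V=W$.

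\textbf{The main obstacle.} The delicate point is pinning down the exact shape $V\cong U\oplus G_{\frac12}U$ rather than the a priori larger $U+G_{\frac12}U+G_{\frac32}U$: one must show $G_{\frac32}U$ contributes nothing new. Here I would argue that $U':=\{u\in U: G_{\frac12}u=0\}$ and similar subspaces are $\fb^{(2)}_{\bar 0}$-submodules of the simple module $U$, hence $0$ or all of $U$; combining this dichotomy with the relations $G_rG_s=L_{r+s}$ and the locally nilpotent action of $L_1,L_2$ (forcing, e.g., $G_{\frac32}G_{\frac32}=L_3=0$ and $G_{\frac12}G_{\frac32}+G_{\frac32}G_{\frac12}=2L_2$) should force $G_{\frac32}U\subseteq G_{\frac12}U$ whenever $G_{\frac12}U\ne 0$, and handle the degenerate case $G_{\frac12}U=0$ separately (there $G_{\frac32}U$ would have to be a one-dimensional-type piece, and a short argument using $[L_1,G_{\frac12}]=\pm G_{\frac32}$ applied inside $V$ shows $G_{\frac32}U$ is still controlled). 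The parity-change functor $\Pi$ accounts for the ambiguity of which of $V_{\bar 0},V_{\bar 1}$ we declared to contain $U$. I expect this last normalization — reducing three summands to two and ruling out genuinely new behavior from $G_{\frac32}$ — to be where essentially all the real work lies; the submodule check itself is routine.
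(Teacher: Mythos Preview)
Your submodule check is fine, and you correctly isolate the real difficulty: once you have $V=U+G_{\frac12}U+G_{\frac32}U$, you must show $G_{\frac32}U$ is redundant. But the sketch you give for this step does not go through. The dichotomy ``$\{u\in U:G_r u=0\}$ is $0$ or $U$'' only tells you that each $G_r$ is either zero or injective on $U$; it does \emph{not} yield $G_{\frac32}U\subseteq G_{\frac12}U$. Indeed, when both are injective, $G_{\frac12}U$ and $G_{\frac32}U$ are two (a priori distinct) simple $\fb^{(2)}_{\bar0}$-submodules of $V_{\bar1}$, each isomorphic to $U$ but with different $L_0$-shifts, and nothing in your argument forces them to coincide. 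The relation $[L_1,G_{\frac12}]=0$ that you invoke gives no leverage here, and the degenerate case $G_{\frac12}U=0,\ G_{\frac32}U\ne0$ is left essentially untreated.

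The paper avoids this obstacle entirely by a different choice of $U$. It first disposes of the case $G_{\frac32}V=0$ (then $L_2V=\tfrac12[G_{\frac12},G_{\frac32}]V=0$, so $V$ is already a simple $\fb^{(1)}$-module and Proposition~\ref{pp51} applies). When $G_{\frac32}V\ne0$, it picks $U$ to be a simple $\fb^{(2)}_{\bar0}$-submodule of $G_{\frac32}V_{\bar1}$ (or $G_{\frac32}V_{\bar0}$). Since $G_{\frac32}^{\,2}=L_3=0$ in $\fb^{(2)}$, this choice gives $G_{\frac32}U=0$ for free, and then $U\oplus G_{\frac12}U$ is immediately a $\fb^{(2)}$-submodule, hence all of $V$. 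So the ``normalization'' you anticipated being the hard part is handled not by proving an inclusion but by choosing $U$ in the kernel of $G_{\frac32}$ from the start.
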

\begin{proof}
Let $V=V_{\bar0}\oplus V_{\bar1}$ be a simple $\fb^{(2)}$-module.  As in Proposition \ref{pp51}, we can suppose that $V_{\bar0}\ne 0$ and $V_{\bar1}\ne 0$.

If $G_{\frac32}V=0$, then $L_2V=\frac12(G_{\frac12}G_{\frac32}+G_{\frac32}G_{\frac12})V=0$. In this case $V$ becomes a simple $\fb^{(1)}$-module.
The proposition follows from Proposition \ref{pp51}.

Now assume $G_{\frac32}V\not=0$. Due to that $G_{\frac32}G_{\frac32}V=0$,  choose a simple $\fb_{\bar{0}}^{(2)}$-submodule $U$ of $G_{\frac32}V_{\bar{1}}$ (or $G_{\frac32}V_{\bar{0}}$), then $G_{\frac32}U=0$ and $U\oplus G_\frac12U$ is a $\fb^{(2)}$-submodule.
So $V=U\oplus G_\frac12U$.
\end{proof}

\subsection{Examples}

\subsubsection{Highest weight modules}
For $h, \ell\in\cc$, let $\cc v$  be one-dimensional  ${\N}_{0}$-module  defined by
$ L_0v=h v, {\bf c}  v=\ell v.$
Let ${\N}_{+}$ act trivially on $v$, making $v $ a $({\N}_{0}\oplus{\N}_{+})$-module.
The {Verma module} for Neveu-Schwarz algebra (cf. \cite{KW1}) can be defined by
$$ M(h, \ell)=U(\N)\otimes_{U(\N_{0}\oplus\N_{+})}\cc v,$$
The module $M(h, \ell)$ has the unique simple quoitent $L(h, \ell)$, the unique (up to isomorphism) simple
highest weight module with highest weight $(h, \ell)$.  These simple modules correspond to the highest weight module in Theorem \ref{MT}.

\subsubsection{Whittaker modules}

Let
$$\mathfrak{p} = \bigoplus_{m>0}\cc L_m \oplus \bigoplus_{m>0}\cc G_{m+\frac12}.$$
and $\psi$ a Lie superalgebra homomorphism $\psi:\mathfrak{p}\to\cc$. It follows that $\psi(L_i) = 0$ for $i \ge 3$ and $\psi(G_{j-\frac12})=0$ for all $j\ge 2$.  For $\ell\in\cc$, let $\cc w$ be one dimensional $(\mathfrak{p}\oplus\cc {\bf c})$-module with $xw=\psi(x)w$ for all $x\in\mathfrak{p}$ and ${\bf c}w=\ell w$, then the Whittaker module for  Neveu-Schwarz algebra is defined by
$$
W({\psi,\ell})= U(\N) \otimes_{U(\mathfrak{p}\oplus\cc {\bf c})}\cc w.
$$
By  \cite{LPX}, the  Whittaker module $W({\psi,\ell})$ is simple if $\psi$ is non-trivial, i.e., $\psi(L_1)\ne0$ or $\psi(L_2)\ne0$.

Let $\psi:\mathfrak{p}\to\cc$ be  a nontrivial Lie superalgebra homomorphism and
 $A_{\psi}=\cc w\oplus \cc u$ with
 be a two-dimensional vector space with
\begin{eqnarray*}
x w &=&\psi(x)w, \
G_{\frac{1}{2}} w=u,\quad \forall x\in\mathfrak{p}.
\end{eqnarray*}
Then $A_{\psi}$ is a simple $\N_+$-module. Consider induced module
$$
V_{\psi}=U(\fb)\otimes_{U(\mathfrak{p})}A_{\psi}\cong \cc[L_0]A_{\psi}.
$$
It is straightforward to check that $V_{\psi}$ is a simple $\fb$-module. Hence, by Theorem \ref{simple-theo}, we obtain the corresponding simple induced  $\N$-module ${\rm Ind}_\ell(V_{\psi})$. These are exactly the Whittaker modules $W({\psi,\ell})$.

\subsubsection{High order Whittaker modules}

For  $t\in \mathbb{N}$, let
\begin{eqnarray*}
\mathfrak{p}^{(t)} &=& \bigoplus_{m> t}\cc L_m \oplus \bigoplus_{m> t}\cc G_{m+\frac12}.
\end{eqnarray*}
It is clear that $\p^{(0)}=\p$. All finite dimensional simple modules over $\p^{(0)}$ have been classified in \cite{LPX}.  Now we shall classify  all finite-dimensional simple modules over   ${\p}^{(t)}$ for $t\in\mathbb{Z}_+$.

We have the following lemma which can be proved in a way similar to Proposition 3.3 in \cite{LPX}, we have

\begin{lemm}\label{prop15}
Let $V$ be a finite dimensional simple  ${\p}^{(t)}$-module for $t\in \mathbb{Z}_+$. Then
there exists $k\in\mathbb{Z}_+$ such that  ${\p}^{(k)}V=0$.
\end{lemm}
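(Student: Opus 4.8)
The plan is to mimic the structure of Proposition 3.3 in \cite{LPX}, which in turn runs parallel to the bosonic argument for the Virasoro algebra (Proposition 4 in \cite{MZ}). The key structural fact to exploit is that $\p^{(t)}$ is a non-negatively graded Lie superalgebra (graded by the $L_0$-eigenvalue, i.e.\ by $\tfrac12\z$), finitely generated, say by the finitely many elements $L_{t+1},\dots,L_{2t+1},G_{t+\frac32},\dots$ in low degree together with one bracket-generating pair, and that the brackets move grading strictly upward except within the top piece. The first step is to fix a nonzero $v\in V$ and observe that, since $V$ is finite-dimensional, each of the commuting-up-to-lower-order operators acts with limited freedom. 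Concretely, I would first show that $L_mV$ and $G_{m-\frac12}V$ must vanish for all sufficiently large $m$: for any homogeneous $x\in\p^{(t)}$ of positive degree acting on the finite-dimensional space $V$, the subspaces $x^nV$ form a decreasing chain, and because $[\p^{(t)},\p^{(t)}]$ raises degree, an iterated-bracket/weight-counting argument (as in \cite{LPX}) forces the image to stabilize at $0$.

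The precise mechanism I would use: pick $m$ large and consider the operator $L_m$ on $V$. Using $[L_m,L_n]=(m-n)L_{m+n}$ one sees that $L_{m+n}V\subseteq L_mV + L_nV$-type relations propagate, and combined with the relation $G_{r}G_{s}+G_{s}G_{r}=2L_{r+s}+(\text{central})$ one gets that $L_{2r}V$ is controlled by $G_rV$. So it suffices to bound the $G$'s. For a fixed large half-integer $r=m-\frac12$ with $m>t$, the vectors $G_rv, G_{r+t+1}v, G_{r+2(t+1)}v,\dots$ obtained by repeatedly bracketing with $L_{t+1}$ lie in the finite-dimensional cyclic space $U(\cc L_{t+1})G_rv$; by induction on $m$ (exactly the induction in the proof of Proposition \ref{th2}, where $V_G=U(\cc L_{t'})G_{j-\frac12}w$ is shown finite-dimensional) one deduces $\sum_{i\ge 0}\cc G_{r+i}v$ is finite-dimensional, and then a minimality/annihilation argument — choosing a minimal-length linear dependence $(a_0G_{m-\frac12}+\cdots+a_nG_{m+n-\frac12})v=0$ and hitting it with $L_{2m-1}$ as done in the proof of Proposition \ref{th2} — collapses $n$ to $0$, giving $G_{m-\frac12}v=0$ for some large $m$, hence (acting by $L_i$) $G_{i-\frac12}v=0$ and $L_iv=0$ for all $i$ beyond some bound depending on $v$.

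Next I would globalize from a single $v$ to all of $V$. Because $V$ is finite-dimensional, choosing a basis $v_1,\dots,v_d$ and taking the maximum of the individual bounds produces a single $k\in\z_+$ with $L_mV=0$ and $G_{m-\frac12}V=0$ for all $m>k$; that is precisely $\p^{(k)}V=0$, since $\p^{(k)}$ is spanned by such $L_m$ and $G_{m-\frac12}$. (If $\p^{(k)}$ as defined for that $k$ still had generators outside this range one simply enlarges $k$; no simplicity of $V$ is even needed for this direction, only finite-dimensionality, though simplicity is what makes the statement interesting.)

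The main obstacle I anticipate is the half-integer bookkeeping in the inductive step that bounds $\sum_{i\ge0}\cc G_{r+i}v$: in the super setting the odd generators square into the even part via $G_r^2=L_{2r}+\text{central}$, so one must be careful that the central term does not obstruct the argument (it does not, since on a finite-dimensional module the relevant differences still force stabilization) and that the parity shifts are tracked correctly when passing between the $G$-chain and the $L$-chain. This is exactly the point flagged in the introduction as "new in the super case," but the computation is the same flavor as the one already carried out in detail in the proof of Proposition \ref{th2}, so I would cite that argument and Proposition 3.3 of \cite{LPX} and only indicate the modifications.
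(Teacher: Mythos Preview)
Your proposal follows the same route the paper indicates (reduce to Proposition~3.3 of \cite{LPX}), so the overall strategy is fine. Two points deserve correction, however.

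First, you import the locally-finite machinery from the proof of Proposition~\ref{th2} to show that $\sum_{i\ge 0}\cc G_{r+i}v$ is finite-dimensional. This is unnecessary: here $V$ itself is finite-dimensional by hypothesis, so any subspace of $V$ is automatically finite-dimensional. More to the point, the statement that $G_{r+t+1}v\in U(\cc L_{t+1})G_rv$ is not correct as written, since $[L_{t+1},G_r]v=L_{t+1}G_rv-G_rL_{t+1}v$ and the second term need not lie in $U(\cc L_{t+1})G_rv$. You do not need this step at all.

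Second, and more seriously, the shortening argument you sketch at the level of a single vector $v$ has a gap. If $(a_0G_{m-\frac12}+\cdots+a_nG_{m+n-\frac12})v=0$ and you apply $L_{2m-1}$, you pick up not only the bracket terms but also $\sum_i a_iG_{m+i-\frac12}(L_{2m-1}v)$, and there is no reason for $L_{2m-1}v$ to vanish or to be a multiple of $v$. The clean fix is to run the argument at the level of operators: the images $\rho(G_{m+\frac12})\in\mathrm{End}(V)$ for $m>t$ lie in a finite-dimensional space, so some nontrivial relation $\sum_{i=0}^n a_i\rho(G_{m+i+\frac12})=0$ holds; bracketing with $\rho(L_{2m+1})$ kills the $i=0$ term and yields a strictly shorter operator relation, forcing $n=0$. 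This gives $G_{m+\frac12}V=0$ directly, after which $G_r^2=L_{2r}$ and further brackets with $L_i$ finish the job. No per-vector argument or subsequent ``globalization'' is needed.
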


\begin{prop}\label{cor16}
Let $S$ be a simple finite dimensional ${\p}^{(t)}$-module for $t\in\mathbb{Z}_+$. Then

\rm{(i)}  $\dim S=1$;

\rm{(ii)} $[{\p}_{\bar1}^{(t)},{\p}_{\bar1}^{(t)}]S=0$.
\end{prop}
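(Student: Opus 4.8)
The plan is to reduce everything to the structure of $\p^{(t)}$ together with Lemma~\ref{prop15}. By Lemma~\ref{prop15}, there exists $k\in\z_+$ with $\p^{(k)}S=0$, so $S$ is in fact a simple module over the finite-dimensional quotient $\p^{(t)}/\p^{(k)}$. Thus I may work with finitely many generators $L_{t+1},\dots,L_{k}$ and $G_{t+3/2},\dots,G_{k+1/2}$ (all higher ones act as $0$), and every relevant sum is finite.

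For part (ii): I would first show $\p^{(t)}_{\bar 1}$ acts nilpotently, in fact that $G_{m+\frac12}^2 S = 0$ for each $m>t$ since $G_{m+\frac12}^2 = L_{2m+1}$ and $2m+1 > 2t \ge k$ once $m$ is large enough — but I need this for \emph{all} $m>t$, so instead I argue via the bracket $[G_r,G_s]=2L_{r+s}$ and the fact that $L_n S = 0$ for $n\ge k$. The cleanest route: let $W = \sum_{m>t}\p^{(t)}_{\bar 1}\,\text{-span applied to }S$; more precisely, pick $G_{r}$ with $r$ maximal such that $G_r S \ne 0$ (exists by finiteness), then $G_r S$ is annihilated by all of $\p^{(t)}_{\bar 1}$ because $[G_r, G_s]=2L_{r+s}$ with $r+s$ large, and $G_s G_r = [G_s,G_r] - G_r G_s$ lands in $G_r S + (\text{higher, hence }0)$; checking $\p^{(t)}_{\bar 0}$ stabilizes $G_r S$ as well, one gets that $G_r S$ is a submodule, hence equals $S$. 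Iterating (or directly) this shows $[\p^{(t)}_{\bar 1},\p^{(t)}_{\bar 1}]S = \sum 2L_{r+s}S = 0$ since all such $r+s > 2t$, and combined with $\p^{(k)}S=0$ one pushes the index down. The honest statement is: $[\p^{(t)}_{\bar1},\p^{(t)}_{\bar1}]S \subseteq \sum_{n>2t}\cc L_n S$, and a descending induction on the largest surviving index — using that $G_{r}S=0$ for $r$ large and the relation $L_n = \tfrac12[G_{n-1/2},G_{1/2}]$-type identities within $\p^{(t)}$ when $n-1/2$ and the other index both exceed $t$ — forces this to be $0$. This is the step I expect to be the main obstacle: getting the bookkeeping right so that \emph{every} $L_n$ with $n>2t$ (not just $n\ge k$) is killed, since $\p^{(t)}$ does not literally contain $G_{1/2}$.

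For part (i): once (ii) is known, $\p^{(t)}$ acts on $S$ through the quotient by the ideal generated by $[\p^{(t)}_{\bar1},\p^{(t)}_{\bar1}]$, on which the odd part acts by a \emph{super-abelian} action (odd elements square to zero and anticommute trivially on $S$). Then $S$ is a simple module over an abelian (in the super sense) finite-dimensional Lie superalgebra: the even part $\p^{(t)}_{\bar 0}/(\cdots)$ is abelian acting on the finite-dimensional $S$, so by a simultaneous-eigenvector / Lie's theorem argument there is a common eigenvector $v\in S_{\bar 0}$ (or $S_{\bar1}$) for all of $\p^{(t)}_{\bar0}$. The odd generators $G_{m+1/2}$ send $v$ to vectors that are again common eigenvectors (eigenvalue shifted, but all higher $L$'s act as scalars, in fact $0$, so eigenvalues are preserved), and since $G_{m+1/2}^2 v \in L_{2m+1}S = 0$ and distinct odd elements anticommute on $S$, the span of $v$ together with the $G_{m+1/2}v$ is a finite-dimensional $\p^{(t)}$-submodule; simplicity together with the nilpotency of the odd action forces $G_{m+1/2}v = 0$ for all $m$, whence $\cc v$ is a submodule and $S=\cc v$. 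I would present (i) as a corollary of (ii) plus this elementary argument, so the whole proof is: invoke Lemma~\ref{prop15} to get finiteness, prove (ii) by the maximal-index/descending-induction argument, then deduce (i).
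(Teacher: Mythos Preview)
Your reduction via Lemma~\ref{prop15} to a finite-dimensional nilpotent quotient $\p^{(t)}/\p^{(k)}$ is exactly the paper's first move. After that, however, the two arguments diverge, and your route has a real gap.

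You try to prove (ii) first and then deduce (i). The paper does the opposite, and for good reason: once $\dim S=1$ is known, (ii) is immediate because $\mathrm{End}_{\cc}(S)$ is commutative. Your proposed direct attack on (ii) --- the ``maximal-index/descending-induction'' argument --- does not close. Concretely, you want $L_nS=0$ for every $n\ge 2t+3$, but from $\p^{(k)}S=0$ you only know this for $n>k$. At the inductive step you would need to express $L_n$ (with $2t+3\le n\le k$) as a bracket of elements of $\p^{(t)}$ that already annihilate $S$; writing $2L_n=[G_r,G_s]$ with $r,s>t+\tfrac12$ gives $L_nS\subset G_rG_sS+G_sG_rS$, and neither $G_r$ nor $G_s$ is known to kill $S$. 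The same obstruction blocks the $[L_a,L_b]$ route. You flag this yourself as ``the main obstacle,'' and indeed no mechanism in your sketch pushes the vanishing index down from $k$ to $2t+2$. Your argument for (i) from (ii) could be made to work (e.g.\ the subspace $\sum_r G_rS$ is a proper submodule once (ii) holds, forcing it to be zero), but it rests on (ii), which you have not established.

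The paper's approach avoids this entirely by observing the structural inclusion
\[
[\p^{(t)}_{\bar1},\p^{(t)}_{\bar1}]\subset[\p^{(t)}_{\bar0},\p^{(t)}_{\bar0}],
\]
which holds because every $L_n$ with $n\ge 2t+3$ already lies in $[\p^{(t)}_{\bar0},\p^{(t)}_{\bar0}]$. This is precisely the hypothesis of the super-analogue of Lie's theorem (Lemma~1.37 in \cite{CW}): a finite-dimensional solvable Lie superalgebra satisfying this inclusion has only one-dimensional simple modules. That gives (i) in one stroke, and (ii) follows. The missing idea in your proposal is this inclusion and the corresponding appeal to the super Lie theorem; inserting it lets you skip the problematic descending induction altogether and reverse the logical order to match the paper's short proof.
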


\begin{proof}

By Proposition~\ref{prop15}, we have ${\p}^{(i)}S=0$ for some
$i\ge t$. Hence $S$ is a simple finite dimensional module
over the nilpotent Lie superalgebra  ${\p}^{(t)}/{\p}^{(i)}$. Moreover $[{\p}^{(t)}_{\bar1}, {\p}^{(t)}_{\bar1}]\subset [{\p}^{(t)}_{\bar0}, {\p}^{(t)}_{\bar0}]$, then  (i) follows from Lemma 1.37 in \cite{CW}). As dim\,$S=1$, we also have that the Lie superalgebra
$\mathrm{End}_{\mathbb{C}}(S)$ is commutative, which implies (ii). This completes the proof.
\end{proof}

Let $\psi_k$ be a Lie superalgebra homomorphism $\psi_k:{\p}^{(k)}\to \cc$ for some $k\in\mathbb{Z}_+$. Then
$\psi_k(L_i) = 0$ for $i \ge 2k+3$ and $\psi_k(G_{j+\frac12})=0$ for all $j\ge k+1$. Let $\cc w$ be one-dimensional ${\p}^{(k)}$-module with $xw=\psi_k(x)w$ for all $x\in{\p}^{(k)}$ and ${\bf c}w=\ell w$ for some $\ell\in\cc$. The higher order Whittaker module $W(\psi_k,\ell)$ is given by
$$
W(\psi_k,\ell)= U(\N) \otimes_{U({\p}^{(k)})}\cc w.
$$
\begin{prop}
For $k\in\mathbb{Z}_+$, the higher order Whittaker module $W(\psi_k,\ell)$ is simple if and only if $\psi_k(L_{2k+1})\ne 0$ or $\psi_k(L_{2k+2})\ne 0$.
\end{prop}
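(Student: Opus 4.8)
The plan is to identify $W(\psi_k,\ell)$ with an induced module of the type appearing in Theorem~\ref{simple-theo} and then to decide when the hypotheses of that theorem are met. Since $\p^{(k)}\subseteq\fb\subseteq\N$, transitivity of induction yields an isomorphism of $\N$-modules $W(\psi_k,\ell)\cong\mathrm{Ind}_\ell(V)$, where $V:=U(\fb)\otimes_{U(\p^{(k)})}\cc w$ is regarded as a $\fb$-module. As $L_0,\dots,L_k,G_{\frac12},\dots,G_{k+\frac12}$ span a vector-space complement of $\p^{(k)}$ in $\fb$, the PBW theorem shows that $V$ has basis $\{L_k^{a_k}\cdots L_1^{a_1}L_0^{a_0}G_{k+\frac12}^{\epsilon_{k+1}}\cdots G_{\frac12}^{\epsilon_1}w:a_i\in\mathbb{N},\ \epsilon_j\in\{0,1\}\}$; in particular $V$ is free of rank one over $\cc[L_0]$ once the ``finite'' monomials are fixed. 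So it suffices to decide when $V$ is a \emph{simple} $\fb$-module satisfying conditions $(a)$ and $(b)$ of Theorem~\ref{simple-theo}.

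For ``$\Leftarrow$'', assume $\psi_k(L_{2k+1})\ne0$ or $\psi_k(L_{2k+2})\ne0$, and set $t:=2k+2$ if $\psi_k(L_{2k+2})\ne0$ and $t:=2k+1$ otherwise; then $\psi_k(L_t)\ne0$, and in the second case $\psi_k(L_{2k+2})=0$. A straightforward finite induction on $i$ (using only the bracket relations, $\psi_k(L_m)=0$ for $m\ge 2k+3$, and the choice of $t$) shows that for every $i\ge t$ the operator $L_i$ commutes on $V$, modulo operators that annihilate $V$, with each of $L_1,\dots,L_k,G_{\frac12},\dots,G_{k+\frac12}$, while $L_iL_0^{a}w=(L_0+i)^{a}L_iw=\psi_k(L_i)(L_0+i)^{a}w$. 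Hence $L_i$ acts on $V$ by replacing the $\cc[L_0]$-part $L_0^{a}$ of a basis vector by $\psi_k(L_i)(L_0+i)^{a}$. This gives condition $(b)$ (for $i>t$ one has $\psi_k(L_i)=0$, so $L_iV=0$) and condition $(a)$ ($L_t$ equals $\psi_k(L_t)\ne0$ times the bijection $L_0\mapsto L_0+t$ of $V$, hence is injective).

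It remains to prove that $V$ is simple, which is the main obstacle. Put $\widehat{\fb}:=\cc L_0\oplus\p^{(k)}$ — a subalgebra of $\fb$, since $[L_0,\p^{(k)}]\subseteq\p^{(k)}$ — and $V_0:=\cc[L_0]w$, so that $V=U(\fb)\otimes_{U(\widehat{\fb})}V_0$. Then $V_0$ is a simple $\widehat{\fb}$-module: the odd part of $\p^{(k)}$ kills $V_0$, and $L_m$ acts by $g(L_0)w\mapsto\psi_k(L_m)g(L_0+m)w$, so a nonzero $L_t$-stable $\cc[L_0]$-submodule $(g(L_0))w$ forces $g(L_0)\mid g(L_0+t)$ with $\psi_k(L_t)\ne0$, whence $g$ is constant. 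Now let $V'$ be a nonzero $\fb$-submodule of $V$. Running an analogue of the degree argument from the proof of Theorem~\ref{simple-theo} — order the monomials in $L_1,\dots,L_k,G_{\frac12},\dots,G_{k+\frac12}$ by a principal-type order; for $0\ne v\in V'$ with $v\notin V_0$, apply the element of $\fb$ whose bracket with the top generator occurring in the leading monomial of $v$ produces $L_t$ (which acts on $V_0$ by the nonzero scalar $\psi_k(L_t)$), so as to strictly lower the degree of $v$ while keeping it nonzero — one eventually reaches a nonzero element of $V'\cap V_0$. As $V'\cap V_0$ is $\widehat{\fb}$-stable and $V_0$ is simple, $V_0\subseteq V'$, hence $V'=U(\fb)V_0=V$. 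By Theorem~\ref{simple-theo}(ii), $W(\psi_k,\ell)\cong\mathrm{Ind}_\ell(V)$ is then simple. The delicate point is the bookkeeping in this degree argument: unlike the classical case $t=0$, where the seed is finite-dimensional over $\cc[L_0]$, the finite block here interacts with $\psi_k$ through longer commutators.

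For ``$\Rightarrow$'' we argue contrapositively: suppose $\psi_k(L_{2k+1})=\psi_k(L_{2k+2})=0$. Then $\psi_k$ vanishes on $[\p^{(k-1)},\p^{(k-1)}]=\mathrm{span}\{L_m:m\ge2k+1\}\cup\{G_r:r\ge2k+\tfrac12\}$, so the prescription $\psi_{k-1}|_{\p^{(k)}}:=\psi_k$, $\psi_{k-1}(L_k):=0$ defines a Lie superalgebra homomorphism $\psi_{k-1}:\p^{(k-1)}\to\cc$, giving the higher-order Whittaker module $W(\psi_{k-1},\ell)$. By the universal property of the induced module there is an $\N$-homomorphism $\phi:W(\psi_k,\ell)\to W(\psi_{k-1},\ell)$ with $\phi(w)=w'$, and it is surjective because $w'$ generates its target. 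But $L_kw\ne0$ in $W(\psi_k,\ell)$ (as $L_k$ lies in a PBW complement of $\p^{(k)}$ in $\N$), whereas $\phi(L_kw)=L_kw'=\psi_{k-1}(L_k)w'=0$; hence $\ker\phi$ is a nonzero proper submodule, so $W(\psi_k,\ell)$ is not simple.
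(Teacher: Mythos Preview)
Your argument is correct and follows the same overarching plan as the paper: identify $W(\psi_k,\ell)$ with $\mathrm{Ind}_\ell(V)$ for a suitable $\fb$-module $V$, verify conditions (a) and (b) of Theorem~\ref{simple-theo} for an appropriate $t\in\{2k+1,2k+2\}$, and show $V$ is simple. The differences are in the bookkeeping. The paper inserts the intermediate subalgebra $\m^{(k)}=\p^{(k)}\oplus\cc G_{k+\frac12}$ and the two-dimensional seed $A_{\psi_k}=\cc w\oplus\cc G_{k+\frac12}w$, observing that $A_{\psi_k}$ is a simple $\m^{(k)}$-module precisely when $\psi_k(L_{2k+1})\ne0$ or $\psi_k(L_{2k+2})\ne0$, and then asserts (without details) that $V_{\psi_k}=U(\fb)\otimes_{U(\m^{(k)})}A_{\psi_k}$ is simple under the same hypothesis. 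You instead use the intermediate subalgebra $\widehat\fb=\cc L_0\oplus\p^{(k)}$ and the seed $V_0=\cc[L_0]w$, and sketch a degree argument for the simplicity of $V$. Both factorizations give the same $V$ by transitivity of induction, so the content is equivalent; your version is somewhat more explicit about why $V$ is simple, which is the step the paper leaves as ``straightforward''.

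For the converse, the paper notes that when $\psi_k(L_{2k+1})=\psi_k(L_{2k+2})=0$ the line $\cc G_{k+\frac12}w\subset A_{\psi_k}$ is a proper $\m^{(k)}$-submodule, which propagates (via freeness of induction) to a proper submodule of $W(\psi_k,\ell)$. Your contrapositive instead extends $\psi_k$ to $\psi_{k-1}:\p^{(k-1)}\to\cc$ and exhibits a surjection $W(\psi_k,\ell)\twoheadrightarrow W(\psi_{k-1},\ell)$ with $L_kw$ in the kernel. This is a clean alternative and has the minor advantage of producing an explicit quotient rather than a submodule. One small point: your definition of $\psi_{k-1}$ should also record $\psi_{k-1}(G_{k+\frac12})=0$, but this is automatic since any Lie superalgebra homomorphism to $\cc$ vanishes on odd elements.
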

\begin{proof}
For $k\in\mathbb{Z}_+$,   let $\psi_k:\mathfrak{p}^{(k)}\to\cc$ be  a  Lie superalgebra homomorphism and
 $A_{\psi_k}=\cc w\oplus \cc u$ with
 be a two-dimensional vector space with
\begin{eqnarray*}
x w &=&\psi_k(x)w, \
G_{k+\frac{1}{2}} w=u,\quad \forall x\in\mathfrak{p}^{(k)}.
\end{eqnarray*}
Then $A_{\psi_k}$ is a simple $\m^{(k)}$-module if and only if  $\psi_k(L_{2k+1})\ne 0$ or $\psi_k(L_{2k+2})\ne 0$.  Moreover, if
$\psi_k(L_{2k+1})=\psi_k(L_{2k+2})= 0$,  then $A_{\psi_k}$ has a trivial submodule $\cc G_{k+\frac12}w$.

Let
$$
V_{\psi_k}=U(\fb)\otimes_{U(\mathfrak{m}^{(k)})}A_{\psi_k}.
$$
It is straightforward to check that $V_{\psi_k}$ is a simple $\fb$-module if and only if  $\psi_k(L_{2k+1})\ne 0$ or $\psi_k(L_{2k+2})\ne 0$. From Theorem \ref{simple-theo}, we obtain the corresponding induced  $\N$-module ${\rm Ind}_\ell(V_{\psi_k})$ is simple if and only if $\psi_k(L_{2k+1})\ne 0$ or $\psi_k(L_{2k+2})\ne 0$. These modules are exactly the higher order Whittaker modules $W({\psi_k,\ell})$.
\end{proof}

 \centerline{\bf ACKNOWLEDGMENTS}

\vskip5pt We gratefully acknowledge the partial financial support from the NNSF (No.11871249, No.11771142), the ZJNSF(No.LZ14A010001), the Shanghai Natural Science Foundation (No.16ZR1425000) and the Jiangsu Natural Science Foundation(No.BK20171294). We would like to thank Yanan Cai and Rencai Lu for useful discussions.


\begin{thebibliography}{FFFF}


\bibitem{ALPY}D. Adamovic, C. Lam, V. Pedic, N. Yu,  On irreducibility of modules of Whittaker type for cyclic orbifold vertex algebra, arXiv:1811.04649, 2018.


\bibitem{ALZ} D. Adamovi\'{c}, R. L{u},  K. Zhao,
Whittaker modules for the affine algebra $A_1^{(1)}$, { Adv. Math.,}    { 289}  (2016), 438-479.


\bibitem{AP} D. Arnal, G. Pinczon,   On algebraically irreducible representations of the Lie
algebra $sl(2)$,  { J. Math. Phys.,} { 15}  (1974), 350-359.





\bibitem{BCW} I. Bagci, K. Christodoulopoulou, E. Wiesner, { Whittaker categories and Whittaker modules for Lie superalgebras}, Comm. Algebra 42 (2014), no. 11, 4932-4947.




\bibitem{BM}  P. Batra, V. Mazorchuk, Blocks and modules for Whittaker pairs, { J. Pure Appl. Algebra,}
{ 215} (2011),  1552-1568.

\bibitem{BO}
G. Benkart, M. Ondrus, Whittaker modules for generalized Weyl algebras, { Represent.
Theory,} {  13} (2009), 141-164.

\bibitem{Bl}R. Block, The irreducible representations of the Lie algebra $sl(2)$ and of
the Weyl algebra. Adv. Math., 139 (1981), 69-110.


\bibitem{CG1}H. Chen, X. Guo, New simple modules for the Heisenberg-Virasoro algebra, J. Algebra, 390
(2013), 77-86.


\bibitem{CHS}H, Chen, Y. Hong, and Y. Su,  A family of new simple modules over the Schr\"{o}dinger-Virasoro algebra, J. Pure Appl. Algebra, 222 (2018), 900-913.


\bibitem {CW} S. Cheng, Weiqiang Wang, { Dualities and Representations of Lie superalgebras}, Graduate Studies in Mathematics, 144. American Mathematical Society, Providence, RI, 2012.


\bibitem{Ch}
K. Christodoupoulou, { Whittaker modules for Heisenberg algebras and imaginary
Whittaker modules for affine lie algebras}, J. Algebra, { 320} (2008), 2871-2890.

\bibitem{DLM} P. Desrosiers, L. Lapointe,  P. Mathieu, { Superconformal field theory and Jack superpolynomials}, JHEP, { 09} (2012), 037.

\bibitem{FJK} E. Felinska, Z. Jaskolski, M. Kosztolowicz, { Whittaker pairs for the Virasoro algebra and the Gaiotto-BMT states}. J. Math. Phys., 53 (2012), 033504.


\bibitem{GLZ1} X. Guo, R. Lu, K. Zhao,
Irreducible modules over the Virasoro
algebras, { Document Math.,}  { 16} (2011), 709-721.



\bibitem{IK1}  K. Iohara, Y. Koga, Representation theory of Neveu-Schwarz and Remond algebras I: Verma
modules. Adv. Math. 177(2003), 61-69.

\bibitem{IK2} K. Iohara, Y. Koga,   Representation theory of Neveu-Schwarz and Remond algebras II: Fock
modules. Ann. Inst. Fourier 53 (2003), 1755-1818.

\bibitem{IK3} K. Iohara, Y. Koga, Representation theory of the Virasoro algebra. Springer
Monographs in Mathematics. Springer-Verlag London, Ltd., London, 2011.


\bibitem{HY}J. T. Hartwig,  N. Yu, Simple Whittaker modules over free bosonic orbifold vertex operator
algebras, arXiv:1806.06133, 2018.



\bibitem{KW} V. Kac, M. Wakimoto, Unitarizable highest weight representation of the Virasoro, Neveu-Schwarz and Ramond algebras, Lecture Notes in Physics, Vol. 261, Springer, Berlin, 1986.

\bibitem{KW1}V. Kac, W. Wang, Vertex operator superalgebras and representations,in: Mathematical Aspects of Conformal and Topological Field Theories and Quantum Groups, Contemporary Math., 175 Amer. Math. Soc., Providence,  1994, 161-191.

\bibitem{Kos} B. Kostant, { On Whittaker vectors and representation theory}, Invent. Math., { 48} (1978), 101--184.



\bibitem{Li}H. Li,  Local systems of vertex operators, vertex superalgebras and modules,  J. Pure Appl. Algebra, 109 (1996), 143-195.


\bibitem{Li1}H. Li, Local systems of twisted vertex operators, vertex superalgebras and twisted modules, in:
Moonshine, the Monster and Related Topics, Proc. Joint Summer Research Conference, Mount
Holyoke , 1994, ed . by C. Dong and G. Mason, Contemporary Math. 193, Amer, Math. Soc.,
Providence, 1996, 203-236.

\bibitem {LPX} D. Liu, Y. Pei, L. Xia, {  Whittaker modules for the super-Virasoro algebras}, J. Algebra,
Appl. (to appear), arXiv:1810.12577.



\bibitem{LWZ} D. Liu, Y. Wu, L. Zhu, Whittaker modules for the twisted Heisenberg-Virasoro algebra, { J. Math. Phys.,} { 51}  (2010), 023524.




\bibitem{LuZ2}R. Lu, K. Zhao, A family of simple weight modules over the Virasoro algebra,   J. Algebra, 479 (2017), 437-460.



\bibitem{Ma}O. Mathieu,  Classification of Harish-Chandra modules over the Virasoro
Lie algebra. Invent. Math., 107 (1992), 225-234.







\bibitem{MW} V. Mazorchuk, E. Weisner, Simple Virasoro modules induced from codimension
one subalgebras of the positive part,  Proc. Amer. Math. Soc., 142 (2014), 3695-3703.


\bibitem{MZ2} V. Mazorchuk, K. Zhao, Characterization of simple highest weight modules,  Canad. Math. Bull., 56 (2013), 606-614.

\bibitem{MZ} V. Mazorchuk, K. Zhao, Simple Virasoro modules which are locally finite over a positive part, {  Selecta Math. (N.S.),}
 { 20} (2014), 839-854.

\bibitem{MR} A. Meurman, A. Rocha-Caridi, Highest weight representations of the Neveu-Schwarz and Ramond algebras, Comm. Math. Phys., 107 (1986), 263-294.


\bibitem{OW} M. Ondrus, E. Wiesner, Whittaker modules for the Virasoro algebra, { J. Algebra
Appl.,}   { 8} (2009), 363-377.




\bibitem{Se} A. Sergeev, { Irreducible representations of solvable Lie superalgebras}, Represent. Theory, { 3} (1999), 435-443.



\bibitem{Su} Y. Su,  Classification of Harish-Chandra modules over the super-Virasoro algebras, Comm. Algebra, { 23} (1995), 3653-3675.

\bibitem{Ta}K. Tanabe, Simple weak modules for the fixed point subalgebra of the Heisenberg vertex
operator algebra of rank 1 by an automorphism of order 2 and Whittaker vectors, Proc. Amer. Math. Soc., 145 (2017), 4127-4140.



\bibitem{Ya}S. Yanagida, Whittaker vectors of the Virasoro algebra in terms of Jack
symmetric polynomial. J. Algebra 333 (2011), 273-294.

\bibitem{ZC}X. Zhang, Y. Cheng, Simple Schr\"{o}dinger modules which are locally finite over the positive part, J. Pure Appl. Algebra, 219 (2015), 2799-2815.





\end{thebibliography}
\end{document}